\newcommand{\R}{\mathbb{R}}
\newcommand{\N}{\mathbb{N}}
\newcommand{\mE}{\mathbb{E}}
\newcommand{\md}{\,{\rm d}}
\newcommand{\s}{\sum\limits}
\newcommand{\w}{\wedge}
\newcommand{\bq}{\begin{eqnarray*}}
\newcommand{\eq}{\end{eqnarray*}}
\newcommand{\one}{1\mkern-5mu{\hbox{\rm I}}}
\theoremstyle{break}
\newtheorem{Def}{Definition}[section]
\newtheorem{Bem}[Def]{Remark}
\newtheorem{Satz}[Def]{Proposition}
\newtheorem{Bsp}[Def]{Example}
\newenvironment{proof}{\noindent{\textit{Proof:}}}{%
\unskip\nobreak\hfil\penalty50\hskip1em\null\nobreak
$\Box$
\parfillskip=\z@\finalhyphendemerits=0\endgraf\bigskip}
\let\oldendBsp\endBsp
\def\endBsp{\unskip\nobreak\hfil\penalty50\hskip1em\null\nobreak\hfil%
$\blacksquare$\parfillskip=\z@\finalhyphendemerits=0\endgraf\oldendBsp}
\let\oldendBem\endBem
\def\endBem{\unskip\nobreak\hfil\penalty50\hskip1em\null\nobreak\hfil%
$\blacksquare$\parfillskip=\z@\finalhyphendemerits=0\endgraf\oldendBem}
\let\oldendout\endout
\def\endout{\unskip\nobreak\hfil\penalty50\hskip1em\null\nobreak\hfil%
$\blacksquare$\parfillskip=\z@\finalhyphendemerits=0\endgraf\oldendout}
\author{\small\sc  Julia Eisenberg \footnote{email: jeisenbe@fam.tuwien.ac.at \newline The research of the author was supported by the Austrian Science Fund, grant P26487}\smallskip\\\footnotesize Institute of Mathematical Methods in Economics, Vienna University of Technology.}
\date{}
\title{Deterministic Income with Deterministic and Stochastic Interest Rates}
\begin{document}
\maketitle
\begin{abstract}\noindent
We consider an individual or household endowed with an initial capital and an income, modeled as a deterministic process with a continuous drift rate. At first, we model the discounting rate as the price of a zero-coupon bond at zero under the assumption of a short rate evolving as an Ornstein-Uhlenbeck process. Then, a geometric Brownian motion as the preference function and an Ornstein-Uhlenbeck process as the short rate are taken into consideration.   
It is assumed that the primal interest of the economic agent is to maximise the cumulated value of (expected) discounted consumption from a given time up to a finite deterministic time horizon $T\in\R_+$ or, in a stochastic setting, infinite time horizon. 
We find an explicit expression for the value function and for the optimal strategy in the first two cases. In the third case, we have to apply the viscosity ansatz.

\vspace{6pt}
\noindent
\\{\bf Key words:} optimal control, Hamilton--Jacobi--Bellman equation, Vasicek model, geometric Brownian motion, interest rate
\settowidth\labelwidth{{\it 2010 Mathematical Subject Classification: }}%
                \par\noindent {\it 2010 Mathematical Subject Classification: }%
                \rlap{Primary}\phantom{Secondary}
                93B05\newline\null\hskip\labelwidth
                Secondary 49L20, 49L25
\end{abstract}                
\section{Introduction}
In the recent years, there appeared a big range of papers considering dividends, consumption, capital injections, where the return functions were defined as an expected discounted value with a constant positive discounting or preference rate. Confer for instance Schmidli \cite{HS}, Albrecher and Thonhauser \cite{AlbThReview}, Cox and Huang \cite{coh}, Eisenberg \cite{eis}. It is not our target to make a review of the existing literature. Therefore, we just refer to the references in the above publications.  

In the mentioned examples, the discounting rate is a constant and does not depend on time, which makes it to a preference rate, describing investment preferences of an agent in the considered model.
Indeed, it is a usual practice that economic models make an assumption of a constant and strictly positive preference rate, which implies a ``sacrifice'' of far future for present and/or near future. This fact leads to a distortion in representation of the economic processes, to say nothing about the unrealistic assumption of market idleness in the considered time period. 

One of the possible extensions of such a model is the introduction of a stochastic interest rate. 
The  stochastisation of the model can be interpreted in two ways.
The first way is to see the stochastic rate as a possibility of a macroeconomic market changing, which would influence the consumption behaviour of a sole economic agent. A suitable example provides the recent US ``Fiscal Cliff'', which is still affecting the pocket of every individual and business in the US.
The second way is to interpret the stochastics in the interest rate as uncertainty about changes in individual preferences of the economic agent. For example, a cold summer can influence the earnings of a farmer family essentially. This can lead to a 	considerable change in the ``investment behaviour'': money today can become much more preferable to money tomorrow in the years of famine compared to the years of plenty.

But what happens if we introduce a stochastic interest rate? 
In actuarial mathematics, the surplus of an insurance entity is usually modeled via a stochastic process due to the uncertainty about future system development: stochastic models approximate the real processes much better than deterministic ones.
Adding a stochastic interest rate into a model with stochastic surplus would complicate the optimization problem a lot, even if we assume the both processes to be independent.
In contrast, deterministic modeling enjoys a much greater ease of computability. Thus, to start with, in the first part of the paper we model the surplus as a deterministic process with a continuous drift function. Further, it is assumed that the discounting function is given by the price of a pure-discount bond at time zero under the spot rate evolving due to the Vasicek model. For detailed description of the bond price theory see, for instance, Brigo and Mercurio \cite[p. 58]{brigo}. 

In \cite{jgt} Eisenberg, Grandits and Thonhauser considered the problem of consumption maximization for an arbitrary drift function under a constant preference rate. There, it was possible to establish an algorithm for determination of the value function. 
In the present problem, we use a similar principle: calculate the value function and the optimal strategy in reverse order, starting at the maturity $T$.
At first, we consider the case of restricted consumption payments and then look at the unrestricted case. Since, the case with restricted payments turned out to be more complicated, we illustrate it with an example.
In a remark, we discuss the problem for an arbitrary deterministic drift function.  

In the second part of the paper we model the surplus as a deterministic process with constant drift. But the discounting function is now a stochastic process. At first, we consider the case where the consumption of the considered economic agent is linked to a stock whose price follows a geometric Brownian motion. Then, we model the short rate as an Ornstein-Uhlenbeck process with special parameters. Just in the first case, it was possible to determine the optimal strategy and the value function. In the second case we had to apply the viscosity ansatz. Also, in the second case we consider just the case with restricted consumption rates. The case with unrestricted rates has to be considered separately and will be studied in our future research. 

To the best of our knowledge, interest rate theory is an unploughed field in insurance mathematics and can open up a lot of research possibilities. Some of them are mentioned in the concluding remark.

\section{Deterministic Preference Function \label{sec:1}}
\noindent
Consider the surplus process, where the surplus rate is given by a non-negative constant $\mu$: 
\[
X_t=x+\mu t
\]
Assume, an individual or household consumes goods depending on the price of a zero-coupon bond at time zero. The short rate is a stochastic quantity and is given by a Vasicek model. Our target is to maximise the cumulated value of the discounted consumption from a given time up to a finite deterministic time horizon $T\in\R_+$.
We do not allow the consumption to cause the ruin, which means that the endpoint of our journey will be always $T$. 
The surplus process under the consumption process $C=\{c_s\}$ is
\[
X_t^C= x+\mu t-\int_0^t c_s\md s\;.
\]
We call a strategy $C=\{c_s\}$ admissible if $c_s\in [0,\xi]$ and $X_t^C\ge 0$ for all $t\in[0,T]$.
The return function corresponding to an admissible strategy $C=\{c_s\}$ is defined as
\[
V^C(t,x)=\int_t^T \mE\Big[e^{-U^r_s}\Big]c_s\md s+X^C_T\mE\Big[e^{-U^r_T}\Big]\;,
\]
where $U_s^r=\int_0^s r_u\md u$ and $\{r_s\}$ is an Ornstein-Uhlenbeck process with $r_0=r$, i.e. $\{r_s\}$ fulfils the following integral equation
\[
r_t=r e^{-at}+\tilde b(1-e^{-at})+\tilde \sigma e^{-at}\int_0^t e^{as}\md W_s\;,
\]
where $r_0=r$ is the initial value of the process, $a,\tilde\sigma>0$, $b\in\R$ are constants and $\{W_s\}$ is a standard Brownian motion. 
Here, due to Brigo and Mercurio \cite{brigo} $\mE\big[e^{-U_s^r}\big]$ denotes the price at zero of a zero-coupon bond (or pure-discount bond) with maturity $s$.
We target to maximize the expected value of discounted consumption. 
\[
V(t,x)=\sup\limits_{C} V^C(t,x)\;.
\]
The HJB equation corresponding to the problem is given by 
\[
V_t+\mu V_x+\sup\limits_{0\le c\le\xi}c\big\{\mE\big[e^{-U_t^r}\big]-V_x\big\}=0\;.
\]
In Borodin and Salminen \cite[p. 525]{bs} one finds a closed expression for $\mE[e^{-U_s^r}]$:
\begin{align*}
\mE[e^{-U_s^r}]=\exp\Big\{-bs-\frac{r-\tilde b}a\big(1-e^{-as}\big)+\frac{\tilde \sigma^2}{4a^3}\big(2as+1-(2-e^{-as})^2\big)\Big\}\;.
\end{align*}
Letting $\sigma:=\frac{\tilde \sigma}{\sqrt{2a}}$ and $b:=\tilde b-\frac{\tilde \sigma^2}{2a^2}$, we have
\begin{align}
\mE[e^{-U_s^r}]=\exp\Big\{-bs-\frac{r- b}a\big(1-e^{-as}\big)-\frac{\sigma^2}{2a^2}(1-e^{-as})^2\Big\}\;.\label{ou}
\end{align}
Let
\begin{align}
f(s):=-bs-\frac{r- b}a\big(1-e^{-as}\big)-\frac{\sigma^2}{2a^2}(1-e^{-as})^2\;.\label{f}
\end{align}
Then, the HJB equation becomes 
\begin{align}
V_t+\mu V_x+\sup\limits_{0\le c\le\xi}c\big\{e^{f(t)}-V_x\big\}=0\;.\label{hjb}
\end{align}
Depending on the parameter choice, the function $f(s)$ will have different properties.
\subsection{The Properties of $f(t)$ \label{properties}}
\begin{figure}[t]
\includegraphics[scale=0.18, bb = 0 -500 200 900]{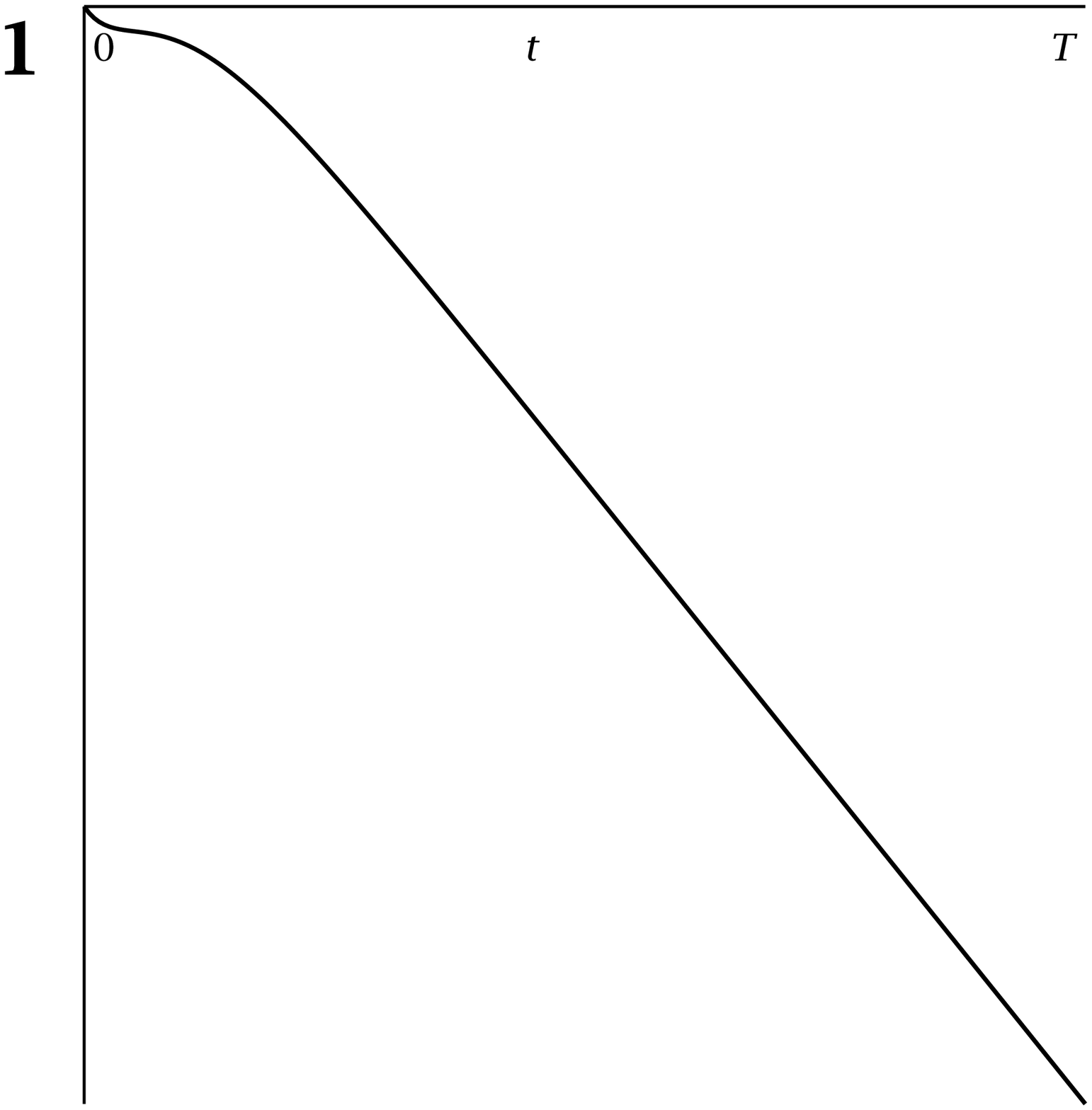}
\includegraphics[scale=0.18, bb = 220 180 400 900]{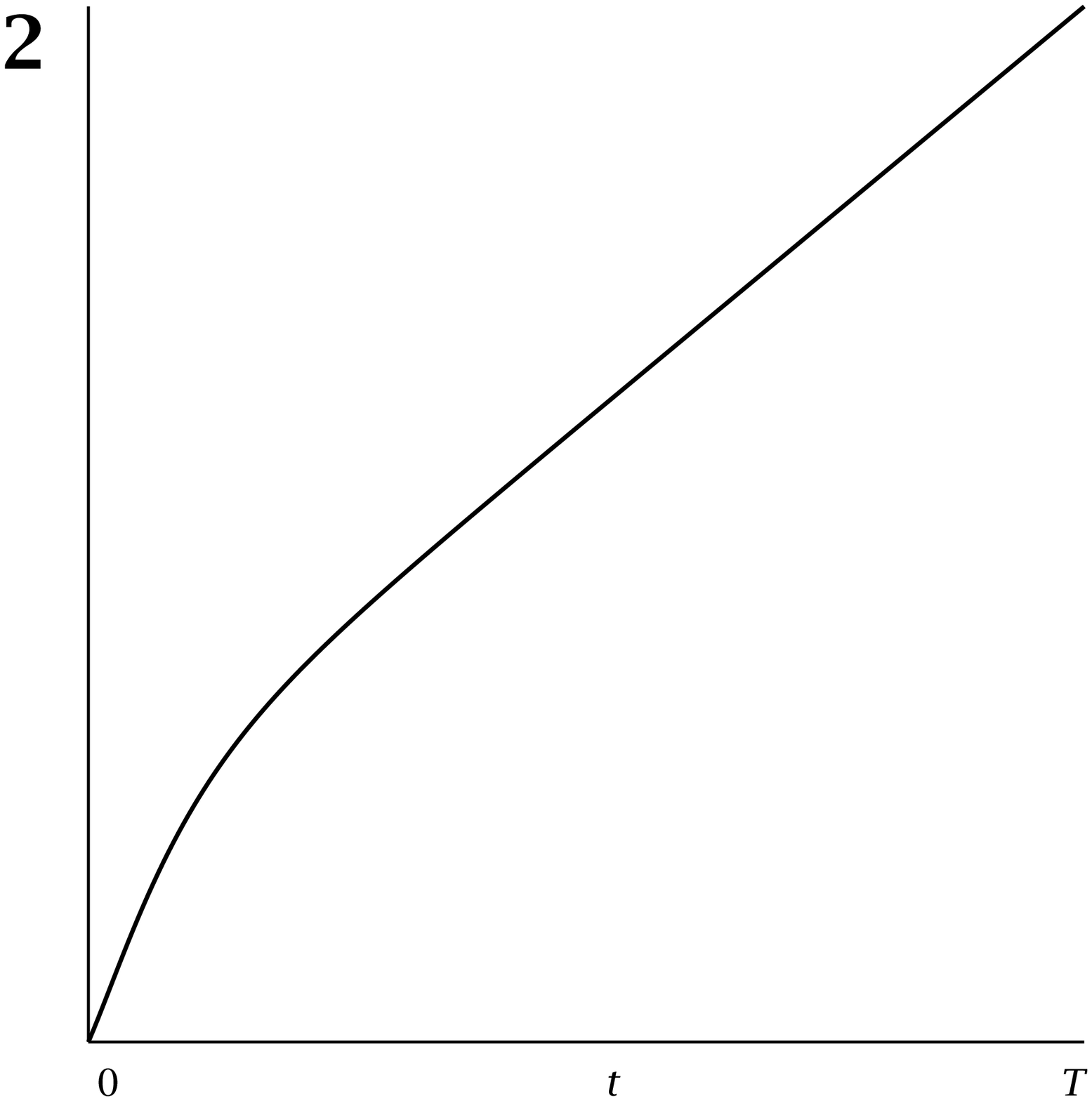}
\includegraphics[scale=0.18, bb = -350 -500 0 900]{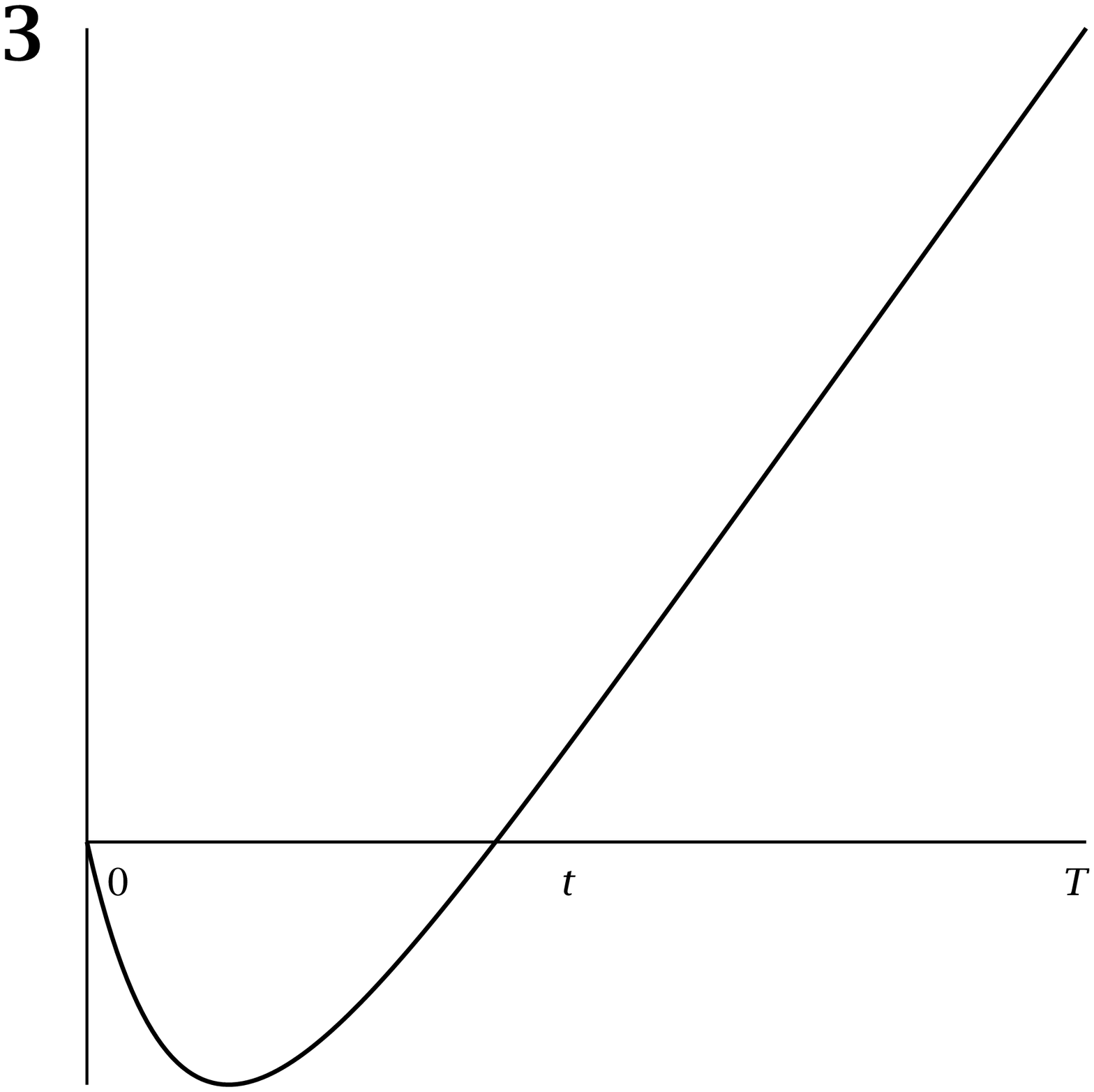}
\includegraphics[scale=0.18, bb = 20 180 200 900]{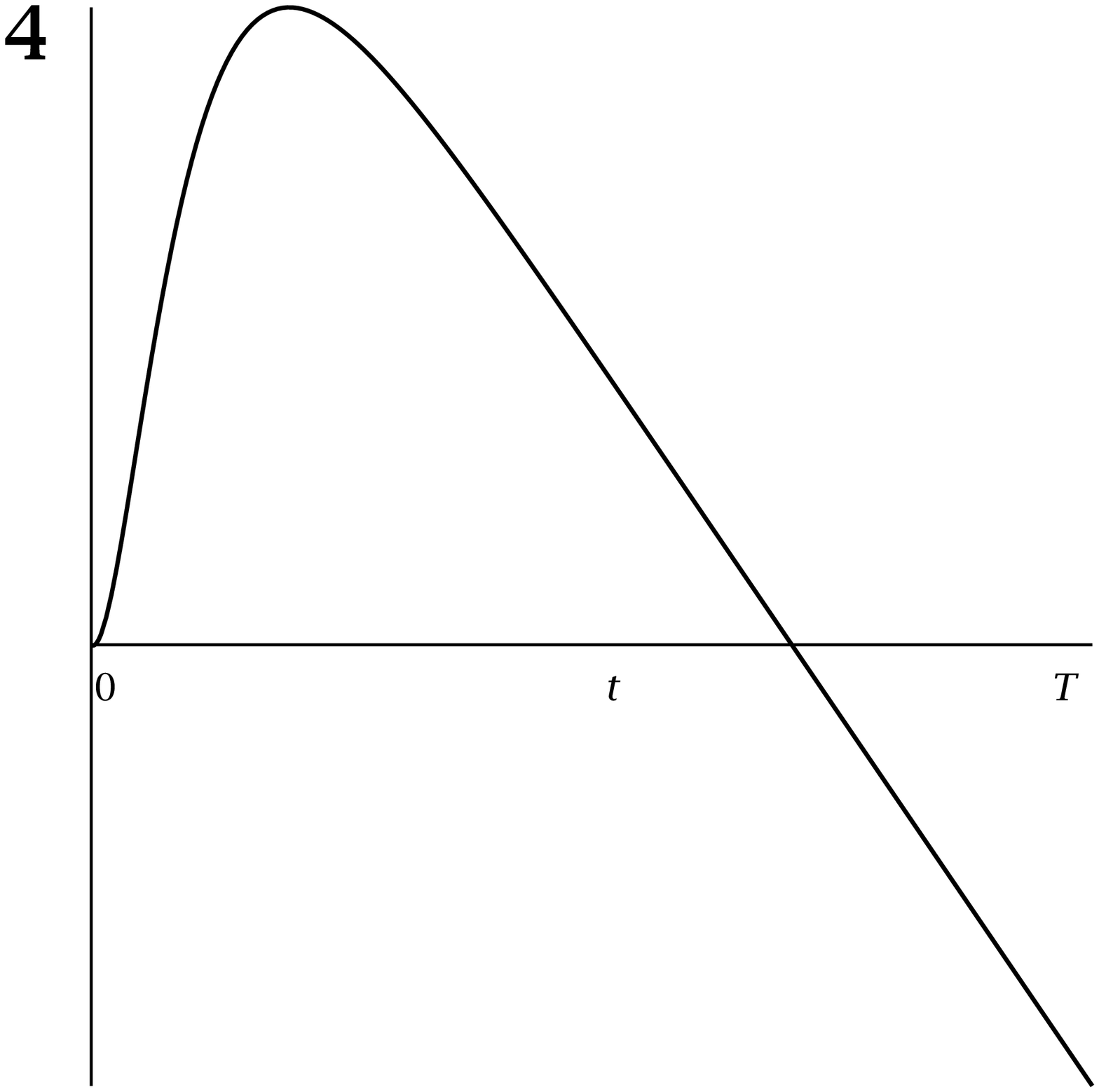}
\includegraphics[scale=0.18, bb = -500 -500 0 900]{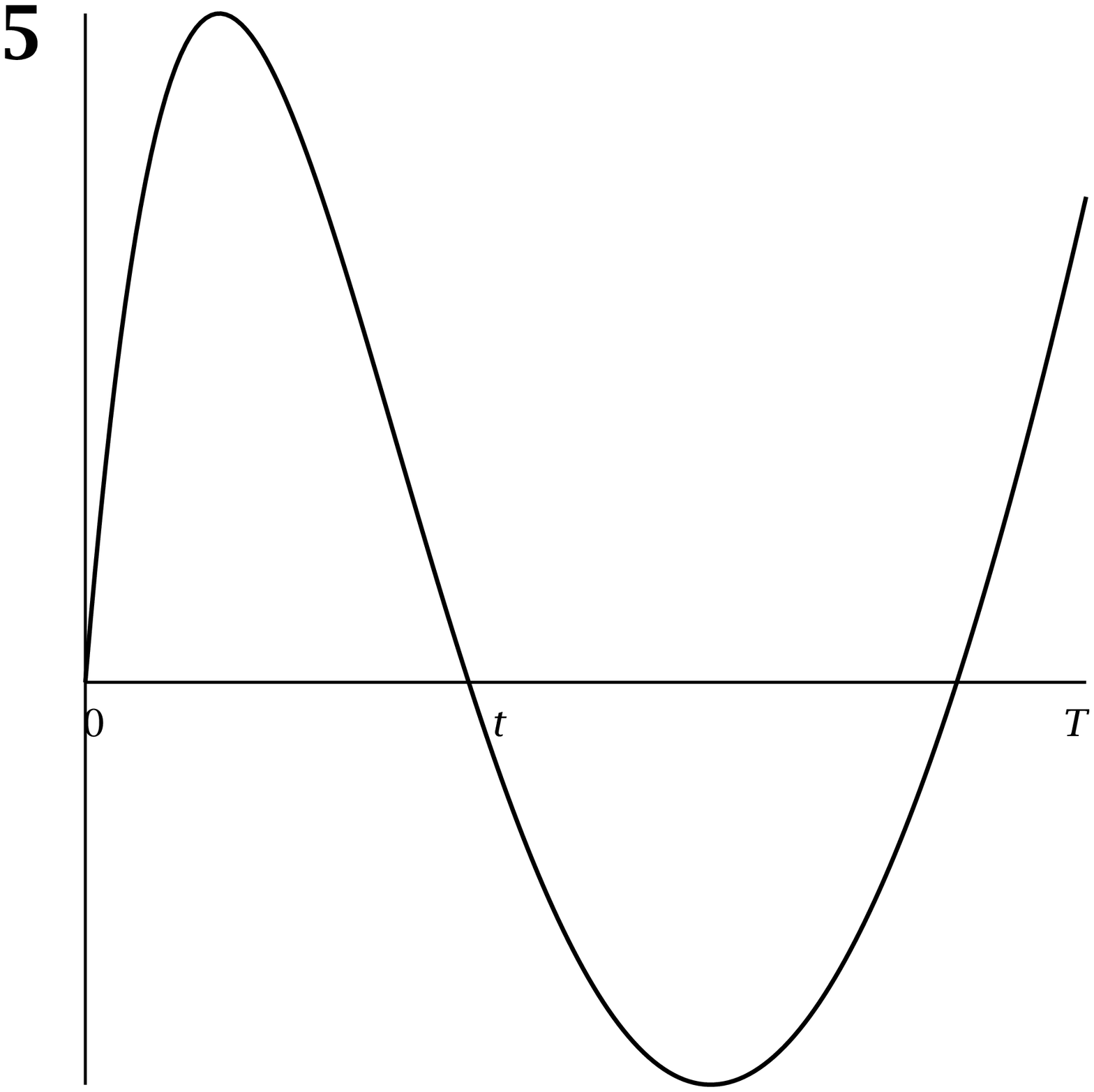}
\caption{Possible development scenarios for $f(t)$. \label{fig.1}}
\end{figure}
\noindent
Consider at first the derivative of $f(t)$.
\[
f'(t)=-b-\big(r-b+\frac{\sigma^2}a\big)e^{- at}+\frac{\sigma^2}a e^{-2 at}\;.
\]
Thus, in order to determine the behaviour of $f(t)$, substitute $e^{-at}$ by $t$ and consider the quadratic function $g(t):=-b-(r-b+\frac{\sigma^2}a)t+\frac{\sigma^2}a t^2$.
It is clear that $g(t)$ is a parabola opened upwards.
In particular, $g(t)$ has at most $2$ zeros $u_1$ and $u_2$:
 \begin{align}
 &D:=(r+b-\frac{\sigma^2}a)^2+4\frac{\sigma^2}ab\nonumber
 \\&u_1:=\frac{(r-b+\frac{\sigma^2}a)-\sqrt{D}}{\frac{2\sigma^2}a}\label{du}
 \\&u_2:= \frac{(r-b+\frac{2\sigma^2}a)+\sqrt{D}}{\frac{2\sigma^2}a}\;.\nonumber
 \end{align}
\begin{itemize}
\item If $D\le 0$, then $f(t)$ is increasing on $[0,T]$. 
\item If $D > 0$, then we have to consider $u_1$ and $u_2$ with $u_1<u_2$. 
\end{itemize}
Assume, $D>0$. The following $5$ scenarios are possible 
\begin{enumerate}
\item \label{1.2} $u_1\le e^{-a T}$ and $u_2\ge 1$. Then, $f(t)$ is decreasing on $[0,T]$.
\item \label{1.1} $u_2\le e^{-a T}$ or $u_1>1$. In this case $f(t)$ is increasing on $[0,T]$.
\item \label{1.3} $u_1\in (e^{-a T},1)$ and $u_2\ge 1$. Then, $f(t)$ is decreasing on $[0,w_2)$ and increasing on $(w_2,T]$, where
\begin{align}
w_2:=-\frac{\ln(u_1)}a\;.\label{r2}
\end{align}
\item \label{1.4} $u_1\le e^{-a T}$ and $u_2\in(e^{-aT},1)$. Then, $f(t)$ is increasing on $[0,w_1)$ and decreasing on $(w_1,T]$, where
\begin{align}
w_1:=-\frac{\ln(u_2)}a\;. \label{r1}
\end{align}
\item \label{1.5} $u_1,u_2\in (e^{-a T},1)$. Then, $f(t)$ is increasing on $[0,w_1)\cup (w_2,T]$ and decreasing on $(w_1,w_2)$.
\end{enumerate}
The possible development scenarios of $f(t)$ are illustrated in Figure \ref{fig.1}. 
\begin{Bem}
In particular, $f(t)$ is injective on $(-\infty,w_1)$, $[w_1,w_2]$ and on $(w_2,\infty)$, so that we can define inverse functions of $f$ acting just on the one of the above intervals:
\bigskip
\\
\begin{tabular}{lll}
$h_1:[f(w_1),1]\to (-\infty,w_1)$& &$f(t)\mapsto t$,
\\$h_2:[f(w_1),f(w_2)]\to [w_1,w_2]$& &$f(t)\mapsto t$,
\\$h_3:[f(T),f(w_2)]\to (w_2,\infty)$& &$f(t)\mapsto t$.
\end{tabular}
\bigskip
\\
In the case \ref{1.3}, we use just the functions $h_2$ on $[f(0),f(w_2)]$ and $h_3$ on $[f(T),f(w_2)]$. Considering \ref{1.4}, we define just $h_1$ on $[f(w_1),f(0)]$ and $h_2$ on $[f(w_1),f(T)]$.
\end{Bem}
For the sake of simplicity, we introduce
\begin{align}
&t_1:=h_1(f(T)),\quad \mbox{for the cases \ref{1.4} and \ref{1.5} given $f(T)\le f(0)$;}\label{t1}
\\&t_2:=h_2(f(T)),\quad \mbox{for the cases \ref{1.3} and \ref{1.5} given $f(T)\ge f(0)$}\label{t2}
\\&\hspace{3cm}\mbox{or $f(T)\ge f(w_1)$ correspondingly.}\nonumber
\end{align}
At first, we will consider the case where the payouts are bounded by some positive constant $\xi$, in the last part we consider the unrestricted case.
\section{The Optimal Strategy and the Value Function for the Zero-Bond Discounting \label{sec1}}
\noindent
We will consider just the fifth case, where $f$ has a maximum and a minimum. The other cases described above can be handled in a similar way. 
\subsection{$\xi\le \mu$}
\noindent
Since $\xi\le \mu$, the process remains non-negative even if we pay out on the maximal rate up to $T$. Thus, for a given pair $(t,x)\in[0,T]\times \R_+$ we have to compare $e^{f(t)}$ and $e^{f(T)}$. The optimal strategy $C^*= \{c^*_s\}$ is then given by 
\begin{equation}
c^*_s=\begin{cases}
\xi, & e^{f(t)}\ge e^{f(T)} 
\\0, & e^{f(t)}< e^{f(T)}
\end{cases}.\label{xikleinermu}
\end{equation}
The value function is then given by
\begin{equation}
V(t,x)=\int_t^T e^{f(s)} c^*_s\md s+(x+\int_t^T \mu-c^*_s \md s)e^{f(T)}\;. \label{value1}
\end{equation}
In particular, it holds $V_x(t,x)=e^{f(T)}$.
It is easy to check, that the value function solves the corresponding HJB equation \eqref{hjb},
is continuously differentiable with respect to $t$ and to $x$. 
Note, that in all five cases the optimal strategy does not depend on the initial capital $x$. 

\subsection{$\xi>\mu$\label{biger}}
\noindent
Here, the maximal payout boundary $\xi$ exceeds the drift $\mu$. 
Let $w_1$ and $w_2$ be the maximum and the minimum point of $f(t)$ correspondingly, defined in \eqref{r1} and \eqref{r2}. 
Note that if $f(w_1)\le f(T)$ it is optimal to wait until $T$ and pay out everything there. Obviously, the corresponding function will solve HJB Equation \eqref{hjb}.
\\Assume now $f(w_1)>f(T)$, i.e. $t_2$, see \eqref{t2}, is well-defined. 
We construct a candidate strategy $\tilde C=\{\tilde c_t\}$ applying a backward algorithm on the intervals $[t_2,T]$, $[t_1,w_1)$, $[w_1,t_2)$ and $[0,t_1)$, if $t_1$, \eqref{t1} exists; or on the intervals $[t_2,T]$, $[0,w_1)$, $[w_1,t_2)$ if $f(0)\ge f(T)$. W.l.o.g we assume $f(0)<f(T)$.
\\Let at first $t\in[t_2,T]$, then, $f(t)\le f(T)$ for all $t$. Let $\tilde c_t=0$ for $t \in [t_2,T]$, i.e. we wait until $T$ and pay out everything there. The corresponding return function $V_1(t,x):=\big(x+\mu (T-t)\big)e^{f(T)}$ obviously solves HJB Equation \eqref{hjb} on  $[t_2,T]\times\R_+$.
\\For $t\in [w_1,t_2)$ let
\[
\tilde c_t=
\begin{cases}
\xi, & x> 0
\\\mu, & x=0
\end{cases},
\]
yielding the return function 
\begin{align*}
&V_2(t,x)=\begin{cases}
\xi\int_t^{t_2}e^{f(s)}\md s+V_1\big(t_2,x+(\mu-\xi) (t_2-t)\big), & \frac{x}{\xi-\mu}+t\ge t_2
\\\xi\int_t^{\frac{x}{\xi-\mu}+t}e^{f(s)}\md s+\mu\int_{\frac{x}{\xi-\mu}+t}^{t_2}e^{f(s)}\md s+V_1(t_2,0), & \frac{x}{\xi-\mu}+t<t_2
\end{cases}
\\&
\frac{\md }{\md x}V_2(t,x)=\begin{cases}
e^{f(T)}, & \frac{x}{\xi-\mu}+t\ge t_2
\\e^{f\big(t+\frac{x}{\xi-\mu}\big)}, & \frac{x}{\xi-\mu}+t<t_2
\end{cases},
\end{align*}
which shows that $V_2$ solves HJB Equation \eqref{hjb}.
Consider now $t\in[t_1,w_1)$. The strategy will depend on the value of $\frac{\md }{\md x}V_2(w_1,x)$. 
Define on $[t_1,w_1)\times\R_+$
\[
\chi(t,x):=\inf\Big\{u>0:\; f(t+u)>f\Big(t+u+\frac{x+\mu u}{\xi-\mu}\Big)\Big\}.
\]
Note that the function $\chi(t,x)$ is a well-defined, continuously differentiable with respect to $x$ and to $t$ function. It holds $t+\chi(t,x)\le w_1$ and $f(t+\chi(t,x))=f\Big(t+\chi(t,x)+\frac{x+\mu \chi(t,x)}{\xi-\mu}\Big)$. 
For $t\in[t_1,w_1)$ let
\[
\tilde c_t=\begin{cases}
\xi, & \chi(t,x)=0
\\0, & \chi(t,x)>0
\end{cases}
\]
and the corresponding return function fulfils
\begin{align*}
&V_3(t,x)= \xi\int_{t+\chi(t,x)}^{t_2}e^{f(s)}\md s+V_2\Big(w_1,x+\chi(t,x)\xi+(\mu-\xi) (w_1-t)\Big)
\\&\frac{\md }{\md x}V_3(t,x)=\begin{cases}
e^{f(T)}, & \frac{x+\chi(t,x)\xi}{\xi-\mu}+t\ge t_2
\\e^{f\Big(t+\frac{x+\chi(t,x)\xi}{\xi-\mu}\Big)}, & \frac{x+\chi(t,x)\xi}{\xi-\mu}+t<t_2
\end{cases}.
\end{align*}
Hence, for the crucial condition in the HJB equation it holds due to the definition of $\chi(t,x)$:
\[
e^{f(t)}-\frac{\md }{\md x}V_3(t,x)=\begin{cases}
e^{f(t)}-e^{f(T)}> 0, & \frac{x+\chi(t,x)\xi}{\xi-\mu}+t> t_2
\\e^{f(t)}-e^{f(t+\chi(t,x))}\le 0, & \frac{x+\chi(t,x)\xi}{\xi-\mu}+t\le t_2
\end{cases},
\]
showing that $V_3$ solves the HJB equation on $(t_1,w_1)\times \R_+$.
\\It remains to consider $[0,t_1]$. There, for every $t$ it holds $f(t)<f(T)$. Let $\tilde c_t=0$ and the corresponding return function on $[0,t_1]\times \R_+$:
\[
V_{4}(t,x)=V_3(t_1,x+\mu(t_1-t))\;.
\]
It is easy to see that the function
\begin{equation}
V(t,x):=\begin{cases}
V_1(t,x), & t\in[t_2,T]
\\V_2(t,x), & t\in[w_1,t_2)
\\V_3(t,x), & t\in[t_1,w_1)
\\V_4(t,x), & t\in[0,t_1)
\end{cases}
\label{value2}
\end{equation}
is continuously differentiable with respect to $x$ and to $t$.
\begin{Satz}
If $\xi\le \mu$, the optimal strategy and the value function are given in \eqref{xikleinermu} and in \eqref{value1} respectively.
If $\xi>\mu$, the optimal strategy is $\tilde C$, described in Subsection \ref{biger}, and the value function is given in \eqref{value2}.
\end{Satz}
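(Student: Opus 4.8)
The argument is a verification theorem, and the plan is to exploit the three facts already established in the construction: in both regimes the candidate $V$ is continuously differentiable in $t$ and $x$, it satisfies the terminal condition $V(T,x)=xe^{f(T)}$, and it solves the HJB equation \eqref{hjb} on $[0,T]\times\R_+$ (in the case $\xi>\mu$ piecewise, using $V_1,\dots,V_4$ together with the $C^1$-pasting). From these I would deduce, first, that $V$ dominates the return function of every admissible strategy, and, second, that the proposed strategy attains this bound, so that the two coincide.

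For the upper bound, fix $(t,x)\in[0,T]\times\R_+$ and an admissible $C=\{c_s\}$. Since $s\mapsto X^C_s$ is absolutely continuous with $\dot X^C_s=\mu-c_s$ and $V\in C^1$, the map $s\mapsto V(s,X^C_s)$ is absolutely continuous and, for a.e. $s$,
\[
\frac{\md}{\md s}V(s,X^C_s)=V_t(s,X^C_s)+(\mu-c_s)V_x(s,X^C_s)\le -c_se^{f(s)},
\]
where the inequality is \eqref{hjb} at $(s,X^C_s)$ together with $\sup_{0\le c\le\xi}c\,(e^{f(s)}-V_x)\ge c_s(e^{f(s)}-V_x)$. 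Integrating over $[t,T]$ and using $V(T,X^C_T)=X^C_Te^{f(T)}$ yields $V(t,x)\ge \int_t^T e^{f(s)}c_s\md s+X^C_Te^{f(T)}=V^C(t,x)$, hence $V(t,x)\ge\sup_C V^C(t,x)$.

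For the matching lower bound I would check that the proposed strategy is admissible and that along it the inequality above becomes an equality. If $\xi\le\mu$, admissibility is immediate because $\dot X^{C^*}_s=\mu-c^*_s\ge 0$ keeps $X^{C^*}$ nondecreasing, hence nonnegative; moreover \eqref{xikleinermu} selects $c^*_s=\xi$ exactly where $e^{f(s)}\ge e^{f(T)}=V_x$ and $c^*_s=0$ where $e^{f(s)}<V_x$, so $c^*_s$ realises the pointwise maximiser in \eqref{hjb} for every $s$; thus $\frac{\md}{\md s}V(s,X^{C^*}_s)=-c^*_se^{f(s)}$ and integration gives $V(t,x)=V^{C^*}(t,x)$, i.e.\ \eqref{value1}. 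If $\xi>\mu$ the same argument runs on each of $[t_2,T]$, $[w_1,t_2)$, $[t_1,w_1)$, $[0,t_1)$ (on the shorter list when $f(0)\ge f(T)$, and trivially via $V_1$ on all of $[0,T]$ when $f(w_1)\le f(T)$): on each piece the feedback rule $\tilde c_t$ was defined precisely so that the sign of $e^{f(t)}-\frac{\md}{\md x}V_i(t,x)$ makes $\tilde c_t$ the maximiser in \eqref{hjb} — this is exactly the sign computation already performed for $V_2$ and $V_3$, and is trivial for $V_1,V_4$ where $e^{f(t)}<e^{f(T)}=V_x$ forces the rate $0$ — so $\frac{\md}{\md s}V(s,X^{\tilde C}_s)=-\tilde c_se^{f(s)}$ a.e.\ and integration gives $V(t,x)=V^{\tilde C}(t,x)$ with $V$ as in \eqref{value2}.

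The step I expect to demand the most care is the well-posedness and admissibility of $\tilde C$ when $\xi>\mu$: since $\tilde c_t$ is prescribed in feedback form and is discontinuous in $x$ (at $x=0$ on $[w_1,t_2)$, and across the switching surface $\{\chi(t,x)=0\}$ on $[t_1,w_1)$), one must verify that it generates a unique absolutely continuous trajectory $X^{\tilde C}$ on $[t,T]$ remaining in $\R_+$; the construction was arranged so that the path reaches $0$ only inside the region where the rate drops to $\mu$ and then stays on the boundary, and so that on $[t_1,w_1)$ it follows the ``wait, then pay at rate $\xi$'' pattern encoded by $\chi(t,x)$. Once this is settled, combining the two inequalities shows simultaneously that $V$ in \eqref{value2} (resp.\ \eqref{value1}) equals $\sup_C V^C$ and that $\tilde C$ (resp.\ $C^*$) is optimal; the remaining ingredients — the $C^1$-pasting of the $V_i$ and the piecewise HJB identities — have already been verified above.
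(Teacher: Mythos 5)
Your argument is correct and is precisely the approach the paper intends: the paper's ``proof'' is only a citation of the standard verification theorem (Fleming--Rishel), and your write-up supplies exactly that argument --- the $C^1$ candidate solving the HJB equation dominates every admissible return function by differentiating $V(s,X_s^C)$ along trajectories, and the feedback strategy attains the bound because it realises the pointwise maximiser. Your added attention to the well-posedness and admissibility of the discontinuous feedback rule $\tilde C$ is a genuine detail the paper glosses over, but it does not change the method.
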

\begin{proof}
Since the proof methods are well-known, we just refer to, for example, Fleming and Soner \cite{fleming}.
\end{proof}
Next, we will consider the case with unrestricted payments, i.e. $\xi\to\infty$.
\subsection*{Unrestricted Payments}
\noindent
The case of unrestricted payments is very easy. Basically, one has to wait until a local maximum and pay out the available capital there.
The corresponding HJB equation is
\[
V_t+\mu V_x+ \sup\limits_{c\ge 0}c\{e^{f(t)}-V_x\}=0\;.
\]
Considering again the fifth case ($f$ has a maximum and a minimum), we have to distinguish between $f(w_1)\ge f(T)$ and $f(w_1)< f(T)$.
\\If $f(w_1)\le f(T)$ then for all $t\in[0,T]$ it is optimal to wait until $T$ and pay out everything there, yielding as the value function $\big(x+\mu(T-t)\big)e^{f(T)}$.
\\Assume now $f(w_1)> f(T)$. For $t\in[t_2,T]$, it is optimal to wait until $T$ and pay out everything there:
\[
V_{1}(t,x)= \big(x+\mu(T-t)\big)e^{f(T)}\;.
\]
For $t\in [w_1,t_2)$, pay out the initial capital immediately, pay on the rate $\mu$ until $t_2$, wait then until $T$ and pay out the collected drift there:
\[
V_{2}(t,x)=xe^{f(t)}+\mu \int_{t}^{t_2}e^{f(s)}\md s+V_{1}(t_2,0)\;.
\]
And finally, for $t\in[0,w_1]$ we have to distinguish between $f(0)\ge f(T)$ and $f(0)< f(T)$. W.l.o.g. we let $f(0)< f(T)$, i.e. $t_1$ exists.
For all $t\in[t_1,w_1)$, one has to wait until the maximum $w_1$:
\[
V_{3}(t,x)=\big(x+\mu(w_1-t)\big)e^{f(w_1)}+V_2(w_1,0)\;.
\]
For $t\in[0, t_1)$ just wait until $t_1$.
\[
V_{4}(t,x)=V_{3}(t_1,x+\mu(t_1-t))\;.
\]
Since the proof methods are well-known, we omit further explanations and just refer to, for example, Schmidli \cite[p. 102]{HS}. 
\\Note that the backward algorithms for both, restricted and unrestricted payments, can be applied for an arbitrary continuously differentiable interest rate function, like for example sine or cosine.
\begin{Bsp}
\begin{figure}
\includegraphics[scale= 0.5, bb = -170 220 200 500]{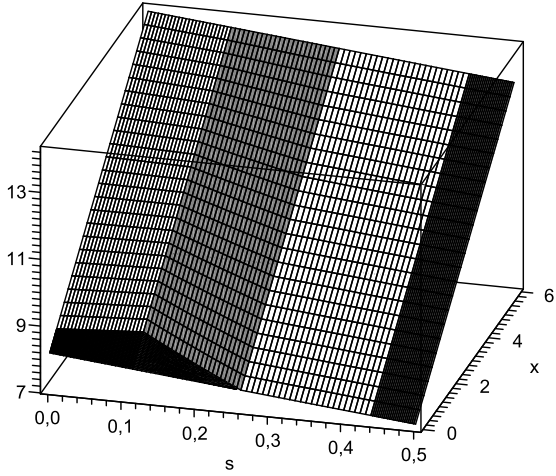}
\caption{The value function $V(s,x)$. \label{fig:bsp}}
\end{figure}
Let $r_0=-0.2$, $b= -0.1$, $a= 1$, $\sigma= 1$, $\mu = 2$, $\xi = 4$ and $T=4$.
Thus, $w_1=0.2611$ and $w_2=2.0414$, $t_1=0.1134$ and $t_2=0.4388$.
Note that it holds $f(w_1)>f(4)>f(0)$.
\\
For $(s,x)\in[t_2,T]\times\R_+$, $f$ is increasing in $s$. We wait until $T$ and pay out everything there. The value function for this area is given by the right (black) slice in Figure \ref{fig:bsp}.
\\In $[w_1,t_2)$ we pay on the maximal possible rate up to $t_2$, white slice (the second from the right) in the picture. 
\\For $s\in[t_1,w_1)$, we either wait until $t+\chi(t,x)$ or start immediately paying on the rate $\xi$ up to $w_1$: second slice from the left in Figure \ref{fig:bsp}. In the black area we wait, in the gray area we pay.  
The value function for $s\in[0,t_1)$ is given by the left slice. Like for $t\in[t_1,w_1)$ we wait in the black area and pay in the white area.
\end{Bsp}
\begin{Bem}[Arbitrary drift function]
Consider the process 
\begin{align*}
X_t= x+ \int_0^t\mu(s)\md s\;,
\end{align*}
where $\mu(s)$ is an arbitrary continuous function with finitely many zeros in $[0,T]$.
An admissible strategy $C$ denotes now the cumulated consumption, is c\`{a}dl\`{a}g, increasing and $\Delta C_s\leq X_{s-}^C$.
The HJB equation in this case is
\[
\max\{V_t+\mu(t) V_x,e^{f(t)}-V_x\}=0\;.
\] 
The problem of consumption maximization for unrestricted payments with deterministic constant interest rate $\delta>0$ was considered in \cite{jgt}. There, it was possible to establish an algorithm for finding an explicit expression for the optimal strategy and the value function. Here, the algorithm for a constant interest rate from \cite{jgt} has to be combined with the algorithm for a constant drift with pure-discount bond described earlier in this paper. 
However, the finding procedure of the value function would be very time- and spaceconsuming.  
\\An interested reader can contact the author for further information.
\end{Bem}
\section{Stochastic Interest Rates}
\noindent
In this section, we consider a model with a stochastic discounting rate and an infinite time horizon. Like before, we assume that the surplus of the considered household is
\[
X_t=x+\mu t\;.
\]
\subsection{Geometric Brownian Motion as a Discounting Process}
\noindent
In this subsection, we let $r_t=r+mt+\sigma W_t$, where $\{W_t\}$ is a standard Brownian motion.
Our target is to maximize the expected discounted consumption over all admissible strategies $C=\{c_s\}$, if the discounting process is given by a geometric Brownian motion.
It means, we assume that the consumption behaviour of the considered household is linked to a stock price modelled by a geometric Brownian motion.  
\\As an admissible strategy we denote all $C=\{c_s\}$ such that $c_s\in[0,\xi]$, $C$ is adapted to the filtration $\{\mathcal F_s\}$, generated by $\{W_s\}$ and $X_t^C=X_t-\int_0^t c_s\md s \ge 0$ for all $t\ge 0$ (i.e. consumption cannot cause ruin). 
The return function corresponding to a strategy $C=\{c_s\}$ and the value function are defined as
\begin{align*}
&V^C(r,x)=\mE\Big[\int_0^\infty e^{-r_s} c_s \md s|r_0=r\Big]\;,\quad (r,x)\in\R\times \R_+\;,
\\& V(t,x)=\sup\limits_C V^C(r,x)\quad (r,x)\in\R\times \R_+\;.
\end{align*}
Note that $\mE[e^{r_u}]=e^{-r-(m-\frac{\sigma^2}2)u}$. In order to guarantee the well-definiteness of the value function, we assume $m>\frac{\sigma^2}2$.
Obviously,
\[
V(r,x)\le \xi \mE\Big[\int_0^\infty e^{-r-(m-\frac{\sigma^2}2)t}\md t\Big]\;,
\]
The above integral is finite for all $r\in\R$. 
The HJB equation corresponding to the problem is
\begin{align}
\mu V_x+m V_r+\frac{\sigma^2}2 V_{rr}+\sup\limits_{0\le c\le \xi}c\Big\{e^{-r}-V_x\Big\}=0\;.\label{hjb2}
\end{align}
Consider at first the case when the boundary $\xi$ is smaller or equal to the drift $\mu$.
Here, we can just pay out on the maximal rate $\xi$ up to $\infty$ without ruining. The return function $V^\xi$ corresponding to such a strategy is then given by
\[
V^\xi(r,x)=\xi\mE\Big[\int_0^\infty e^{-r_s}\md s \Big]=\xi \int_0^\infty e^{-r -\big(m-\frac {\sigma^2}2\big)s} \md s=\frac{\xi e^{-r}}{m-\frac{\sigma^2}2}\;.
\]
$V^\xi$ does not depend on $x$ and  obviously solves HJB Equation \eqref{hjb2}.
\\Consider now $\xi>\mu$. Now it is impossible to pay out on the rate $\xi$ till the end of the time.
Instead, we consider the strategy $\hat C=\{\hat c_s\}$
\begin{align}
\hat c_s=\begin{cases}
\xi & 0 \le s\le \frac{x}{\xi-\mu}
\\\mu & s> \frac{x}{\xi-\mu}\label{strat1}
\end{cases}\;.
\end{align}
The corresponding return function is given by
\[
V^{\hat C}(r,x)=\xi \int_0^{\frac x{\xi-\mu}} e^{-r-(m-\frac{\sigma^2}2)s}\md s+\mu \int_{\frac x{\xi-\mu}}^\infty e^{-r-(m-\frac{\sigma^2}2)s}\md s\;.
\] 
\begin{Satz}
The strategy $\hat C$, defined in \eqref{strat1}, is the optimal strategy and $V^{\hat C}(r,x)$ is the value function.
\end{Satz}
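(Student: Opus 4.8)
The plan is a classical verification argument based on the HJB equation \eqref{hjb2}. First I would bring $V^{\hat C}$ into closed form: with $\beta:=m-\frac{\sigma^2}{2}>0$, evaluating the two elementary integrals gives
\[
V^{\hat C}(r,x)=\frac{e^{-r}}{\beta}\Big(\xi-(\xi-\mu)e^{-\beta x/(\xi-\mu)}\Big),
\]
which is smooth on $\R\times\R_+$ and satisfies $V^{\hat C}_r=-V^{\hat C}$, $V^{\hat C}_{rr}=V^{\hat C}$ and $V^{\hat C}_x=e^{-r}e^{-\beta x/(\xi-\mu)}$. Since $x\ge 0$ forces $e^{-r}-V^{\hat C}_x\ge 0$, the supremum in \eqref{hjb2} is attained at $c=\xi$, and substituting the derivatives shows by a direct computation that $V^{\hat C}$ solves \eqref{hjb2} on $\R\times\R_+$. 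I would also record the bound $0<V^{\hat C}(r,x)\le\frac{\xi}{\beta}e^{-r}$, needed below. Finally, because $\hat C$ is deterministic and $\mE[e^{-r_s}]=e^{-r-\beta s}$, this $V^{\hat C}$ is exactly the return function of $\hat C$, and $\hat C$ is admissible since its surplus equals $\max\{x-(\xi-\mu)s,0\}\ge 0$.

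Second I would prove $V^C(r,x)\le V^{\hat C}(r,x)$ for every admissible $C=\{c_s\}$. As $X^C$ is absolutely continuous with $\md X^C_t=(\mu-c_t)\md t$, It\^o's formula gives that $V^{\hat C}(r_t,X^C_t)$ has drift $mV^{\hat C}_r+\frac{\sigma^2}{2}V^{\hat C}_{rr}+(\mu-c_t)V^{\hat C}_x$ and martingale part $\sigma V^{\hat C}_r\,\md W_t$. By \eqref{hjb2} the drift is bounded above by $-c_te^{-r_t}$, so $t\mapsto V^{\hat C}(r_t,X^C_t)+\int_0^t c_se^{-r_s}\md s$ is a supermartingale; the stochastic integral is a true martingale because $|V^{\hat C}_r|=V^{\hat C}\le\frac{\xi}{\beta}e^{-r}$ and $\mE[e^{-2r_s}]$ is finite and locally bounded in $s$. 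Taking expectations and letting $t\to\infty$, using $V^{\hat C}\ge 0$ and monotone convergence on the left, yields $\mE\big[\int_0^\infty c_se^{-r_s}\md s\big]\le V^{\hat C}(r,x)$.

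Combining the two steps, $V(r,x)=\sup_C V^C(r,x)\le V^{\hat C}(r,x)\le V(r,x)$, where the last inequality holds because $\hat C$ is admissible with return function $V^{\hat C}$; hence $V=V^{\hat C}$ and the supremum is attained at $\hat C$. The only step that is not a routine computation is the true-martingale property of the It\^o integral, which, as indicated, follows from the explicit bound $V^{\hat C}\le\frac{\xi}{\beta}e^{-r}$ together with finiteness of $\mE[e^{-2r_s}]$; alternatively, the measure-theoretic bookkeeping can be replaced by an appeal to a standard verification theorem, e.g. Fleming and Soner \cite{fleming}.
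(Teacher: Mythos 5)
Your proposal is correct and follows essentially the same route as the paper: verify that $V^{\hat C}$ solves the HJB equation \eqref{hjb2} with $e^{-r}-V^{\hat C}_x\ge 0$, then apply It\^o's formula to $V^{\hat C}(r_t,X^C_t)$ for an arbitrary admissible $C$ and pass to the limit $t\to\infty$. Your treatment is in fact slightly more careful on the one delicate point, justifying the true-martingale property of the stochastic integral via the explicit bound $|V^{\hat C}_r|=V^{\hat C}\le\frac{\xi}{\beta}e^{-r}$ and finiteness of $\mE[e^{-2r_s}]$, where the paper simply asserts that $V^{\hat C}$ is bounded.
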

\begin{proof}
Consider the function $V^{\hat C}(r,x)$. It holds
\[
V_x^{\hat C}(r,x)=e^{-r-(m-\frac{\sigma^2}2)\frac x{\xi-\mu}}\;.
\]
Thus, for all $x\ge 0$ it holds
\[
e^{-r}-V_x^{\hat C}(r,x)=e^{-r}\Big(1-e^{-(m-\frac{\sigma^2}2)\frac x{\xi-\mu}}\Big)\ge 0\;.
\]
It is easy to see that the function $V^{\hat C}$ solves HJB equation \eqref{hjb2}.
\\It remains to prove that $V^{\hat C}(t,x)=V(t,x)$. Let $C=\{c_s\}$ be an arbitrary admissible strategy, then holds
\begin{align*}
V^{\hat C}(r_t,X_t^C)&=V^{\hat C}(r,x)+\int_0^t (\mu-c_s) V^{\hat C}_x(r_s,X^C_s)+mV^{\hat C}_r(r_s,X^C_s)+\frac{\sigma^2}2 V^{\hat C}_{rr}(r_s,X_s^C)\md s
\\&\quad {}+ \sigma \int_0^t V^{\hat C}_r(r_s,X_s^C) \md W_s
\\&\le  -\int_0^t e^{-r_s} c_s\md s+\sigma \int_0^t V^{\hat C}_r(r_s,X_s^C) \md W_s\;.
\end{align*}
Because $V^{\hat C}$ is bounded, the stochastic integral above is a martingale with expectation zero. Further,
\[
\mE\big[V^{\hat C}(r_t,X_t^C)\big]\le \mE\big[e^{-r-mt-\sigma W_t}\big]=e^{-r}e^{-(m-\frac{\sigma^2}2)t}\;.
\]
Thus, applying the expectations and letting $t\to \infty$ yields $$\mE\Big[\int_0^t e^{-r_s} c_s\md s\Big] \le V^{\hat C}(r,x)$$.
\end{proof}
For unrestricted payments the HJB equation is
\[
\max\{\mu V_x+m V_r+\frac{\sigma^2}2 V_{rr},e^{-r}-V_x\}=0\;
\]
And, it is easy to see that the value function is given by
\[
V(r,x)=e^{-r}x+e^{-r}\mu\int_0^\infty  \mE\Big[e^{-mt-\sigma W_t}\Big]\md t=e^{-r}x+e^{-r}\frac{\mu}{m-\frac{\sigma^2}2}\;.
\]
It means, we have to pay out the initial capital immediately and to pay on the rate $\mu$ up to the infinite time horizon.
For the proof methods confer for example Schmidli \cite[p. 102]{HS}.

\subsection{Ornstein-Uhlenbeck Process a Short Rate}
\noindent
Like in Section \ref{sec:1}, we denote again by $\{r_s\}$ an Ornstein-Uhlenbeck process
\[
r_s=re^{-a s}+\tilde b(1-e^{-a s})+\tilde\sigma e^{-as}\int_0^se^{au}\md W_u\;,
\]
where $\{W_u\}$ is a standard Brownian motion, $a,\tilde\sigma>0$, and let $U^r_s=\int_0^s r_u\md u$ with $r_0=r$.
Our target is to maximize the expected discounted consumption over all admissible strategies $C=\{c_s\}$, if the interest rate is given by $\{r_t\}$. A strategy $C=\{c_s\}$ is called admissible if $c_s\in[0,\xi]$, is adapted to the filtration $\{\mathcal F_s\}$, generated by $\{r_s\}$ and $X^C_t=X_t-\int_0^t c_s\md s\ge 0$ for all $t\ge 0$.
\\
Here, we assume that the long-term mean $\tilde b$ of the process $\{r_s\}$ fulfils: $\tilde b> \frac{\tilde \sigma^2}{2a^2}$. 
The return function corresponding to a strategy $C=\{c_s\}$ and the value function are defined by
\begin{align*}
&V^C(r,x)=\mE\Big[\int_0^\infty e^{-U^r_s} c_s \md s|X_0=x\Big],\quad (r,x)\in\R\times\R_+\;,
\\& V(r,x)=\sup\limits_C V^C(r,x),\quad (r,x)\in\R\times\R_+\;.
\end{align*}
Since $r$ is now a variable and not a constant parameter like in Section \ref{sec1}, we manifest this fact by writing $f(r,s)$ instead of $f(s)$ for the function $f$ defined in \eqref{f}. 
Denoting again $\sigma:=\frac{\tilde \sigma}{\sqrt{2a}}$ and $b:=\tilde b-\frac{\tilde \sigma^2}{2a^2}>0$, we have $\mE[e^{-U^r_s}]=e^{f(r,t)}$.
The HJB equation corresponding to the problem is
\begin{align}
\mu V_x+a(\tilde b-r) V_r+\frac{\tilde \sigma^2}2 V_{rr}-r V+\sup\limits_{0\le c\le \xi}c\Big\{1-V_x\Big\}=0\;.\label{hjb3}
\end{align} 
Further, the function $e^{f(r,s)}$ can be estimated as follows
\begin{align*}
e^{f(r,t)}&=\exp\Big\{-bt-\frac{r-b}a(1-e^{-at})-\frac{\sigma^2}{2a^2}(1-e^{-at})^2\Big\}
\\&\le \exp\Big\{-bt-\min\Big(\frac{r-b}a,0\Big)\Big\}.
\end{align*}
Using the above estimation and the fact $b>0$, we find the following boundary for the value function:
\begin{equation}\label{schranke}
\begin{split}
V(r,x)&\le \xi \mE\Big[\int_0^{\infty} e^{-U^r_s}\md s\Big]
=\xi\int_0^\infty e^{f(r,s)}\md s\le \frac \xi b\exp\Big\{-\min\Big(\frac{r-b}a,0\Big)\Big\}\,,
\\V(r,x)&\ge \xi \int_0^{\frac{x}{\xi-\mu}\vee 0} e^{f(r,s)}\md s+ \mu \int_{\frac{x}{\xi-\mu}\vee 0}^\infty e^{f(r,s)}\md s\,.
\end{split}
\end{equation} 
for every choice of $a,\sigma>0$ and all $(r,x)\in\R\times\R_+$.
\subsubsection{Restricted rates with $\xi\le \mu$.}
\noindent
Assume first $\xi\le \mu$. In this case the process $X^\xi_t=x+(\mu-\xi)t$ will never hit zero. The return function $V^\xi$ corresponding to the constant strategy $c_s\equiv \xi$ is given by:
\begin{align*}
V^\xi(r,x)&= \xi\mE\Big[\int_0^\infty e^{-U^r_s}\md s\Big]
=\xi\int_0^\infty e^{f(r,s)}\md s\;.
\end{align*}
Note that $V^\xi$ does not depend on $x$ in this case. In particular:
\[
1-V_x^\xi(r,x)=1\;.
\]
It is an easy exercise to prove that $V^\xi$ solves the ODE
\[
a(\tilde b-r) v_r+\frac{\tilde \sigma^2}2 v_{rr}-r v+\xi=0\;.
\]
For $V^\xi(r,x)$ it is possible to interchange integration and differentiation so that
\begin{align*}
&V^\xi_r(r,x)=\xi\int_0^\infty -\frac{1-e^{-as}}a e^{f(r,s)}\md s,
\\&V^\xi_{rr}(r,x)=\xi\int_0^\infty \frac{(1-e^{-as})^2}{a^2} e^{f(r,s)}\md s\;.
\end{align*}
Thus,
\[
a(\tilde b-r) V^\xi_r(r,x)+\frac{\tilde \sigma^2}2 V^\xi_{rr}(r,x)-r V^\xi(r,x)=\xi\int_0^\infty f_s(r,s)e^{f(r,s)}\md s=-\xi e^{f(r,0)}=-\xi\;,
\]
which proves our claim.
Here, the function $V^\xi$ becomes a candidate for the value function.
\subsubsection{Restricted rates with $\xi> \mu$.}
\noindent
Assume now $\xi>\mu$.
The return function corresponding to the strategy 
\begin{align}
\hat c_s=\begin{cases}
\xi & 0 \le s\le \frac{x}{\xi-\mu}
\\\mu & s> \frac{x}{\xi-\mu}\label{strat}
\end{cases}
\end{align}
is given by
\begin{align*}
V^{\hat C}(r,x)=\mE\bigg[\xi\int_0^{\frac x{\xi-\mu}} e^{-U^r_s} \md s+\mu\int_{\frac x{\xi-\mu}}^\infty e^{-U^r_s}\md s\bigg]
=\xi \int_0^{\frac x{\xi-\mu}} e^{f(r,s)}\md s+\mu\int_{\frac x{\xi-\mu}}^\infty e^{f(r,s)}\md s.
\end{align*}
Obviously, $V^{\hat C}$ is continuously differentiable with respect to $x$ and twice continuously differentiable with respect to $r$.
Like in the case $\xi\le\mu$, we can interchange integration and derivation and obtain
\begin{align*}
a(\tilde b-r) V^{\hat C}_r+\frac{\tilde \sigma^2}2 V^{\hat C}_{rr}-r V^{\hat C}&=\xi\int_0^{\frac x{\xi-\mu}}f_s(r,s)e^{f(r,s)}\md s+\mu\int_{\frac x{\xi-\mu}}^\infty f_s(r,s)e^{f(r,s)}\md s
\\&=(\xi-\mu) e^{f\big(r,\frac x{\xi-\mu}\big)}-\xi\;.
\end{align*}
The derivative of $V^{\hat C}$ with respect to $x$ is given by
\[
V^{\hat C}_x(r,x)=e^{f\big(r,\frac x{\xi-\mu}\big)}\;.
\]
And we can conclude that $V^{\hat C}$ solves the PDE
\begin{equation*}
(\mu-\xi) v_x+a(\tilde b-r) v_r+\frac{\tilde \sigma^2}2 v_{rr}-r v+\xi=0\;.
\end{equation*}
Note that $1-V_x^{\hat C}\ge 0$ iff $f\big(r,\frac x{\xi-\mu}\big)\le 0$.
In order to find out whether $V^{\hat C}$ could become a good candidate for the value function, we have to investigate the properties of the function $f(r,s)$.
\smallskip
\\Due to Subsection \ref{properties}, for a fixed $r$ and $b>0$ the function $f_s(r,s)$ can have at most one zero at $s=w_1(r)=-\frac 1a\ln(u_1(r))$ with
\[
u_1(r)=\frac{r-b+\frac{\sigma^2}a+\sqrt{\big(r-b+\frac{\sigma^2}a\big)^2+4b\frac{\sigma^2}a}}{2\sigma^2/a}>0\;.
\]
Note that $f_{ss}(r,s)=-af_s(r,s)-a\big(b+\frac{\sigma^2}ae^{-2as}\big)$. This means that for a fixed $r$ it holds either $f_s(r,s)\le 0$ on $[0,\infty)$, if $u_1(r)\ge 1$, or $f_s(r,s)>0$ on $[0,w_1(r))$ and $f_s(r,s)<0$ on $(w_1(r),\infty)$, if $u_1(r)<1$.
Consequently, we consider just the cases \ref{1.2} and \ref{1.4} in Subsection \ref{properties}, illustrated in Pictures $1$ and $4$ in Figure \ref{fig.1}. 
It is easy to see that the function $u_1(r)$ is increasing in $r$ and $u_1(0)=1$. It means that $f(r,s)<0$ for all $(r,s)\in\R_+^2$. Thus, for the strategy $\hat C$ defined in \eqref{strat} it holds
\[
V_x^{\hat C}(r,x)=e^{f\big(r,\frac{x}{\xi-\mu}\big)}\le 1\;\quad (r,x)\in\R_+^2\;.
\]
If $r<0$ and $s>0$, then for every fixed $r\in\R_-$ the function $f(r,s)$ attains its maximum at $w_1(r)$. Further, since $f(r,0)=0$ for all $r\in \R$ and $\lim\limits_{s\to\infty}f(r,s)=-\infty$ the curve
\begin{equation*}
\alpha(s):=\frac a{1-e^{-as}}\Big\{-bs+\frac ba(1-e^{-as})-\frac{\sigma^2}{2a^2}(1-e^{-as})^2\Big\}
\end{equation*}
is unique with $f\big(\alpha(s),s\big)\equiv 0$. 
Using the power series representation of the logarithm function, confer for example \cite[p. 381]{hh}, it holds for $s>0$:
\begin{align*}
\alpha(s)&=\frac a{1-e^{-as}}\Big\{-\frac ba\s_{n=1}^\infty \frac{(1-e^{-as})^n}{n}+\frac ba(1-e^{-as})-\frac{\sigma^2}{2a^2}(1-e^{-as})^2\Big\}
\\&= 
-b\s_{n=1}^\infty \frac{(1-e^{-as})^n}{n+1}-\frac{\sigma^2}{2a}(1-e^{-as})<0,
\\
\alpha'(s)&= -ba\cdot e^{-as}\s_{n=1}^\infty \frac{(1-e^{-as})^{n-1}n}{n+1}-\frac{\sigma^2}{2}e^{-as}<0\;.
\end{align*}
Thus, $\alpha$ is negative and strictly decreasing. Let $\beta(r)$ denote the inverse function of $\alpha(s)$ for $r\in(-\infty,0)$ (is well-defined because $\alpha$ is strictly decreasing), i.e. $\beta(\alpha(s))=s$. Then $\beta(r)$, $r\in\R_-$, is positive and strictly decreasing. In particular, $f(r,s)>0$ for $s<\beta(r)$ and $f(r,s)<0$ for $s>\beta(r)$ and $V_{xx}^{\hat C}(r,x)<0$ for $x\ge \beta(r)$.
Thus, the function $V^{\hat C}$ could not be the value function. 
\bigskip
\begin{Satz}\label{properties2}
The value function $V(r,x)$ is locally Lipschitz continuous, strictly increasing and concave in $x$; locally Lipschitz continuous, decreasing and convex in $r$. It holds $\lim\limits_{r\to\infty} V(r,x)=0$.
\end{Satz}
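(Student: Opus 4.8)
The plan is to read off all the assertions from the representation $V(r,x)=\sup_{C\in\mathcal A(x)}V^C(r)$, $V^C(r)=\mE\big[\int_0^\infty e^{-U^r_s}c_s\,\md s\big]$, together with two structural remarks. (i) The admissible set $\mathcal A(x)=\{(c_s):c_s\in[0,\xi]\text{ adapted to }\{\mathcal F_s\},\ x+\mu s-\int_0^s c_u\,\md u\ge 0\ \forall\, s\}$ does not depend on $r$ (the filtration generated by $\{r_s\}$ coincides with that of $\{W_s\}$, irrespective of $r_0=r$), is non-decreasing in $x$, and satisfies $\lambda\mathcal A(x_1)+(1-\lambda)\mathcal A(x_2)\subseteq\mathcal A(\lambda x_1+(1-\lambda)x_2)$ because the state is affine in $(x,C)$ and $[0,\xi]$ is convex; moreover $C\mapsto V^C(r)$ is linear. (ii) Since $U^r_s=\tfrac{1-e^{-as}}{a}\,r+R_s$ with $R_s$ free of $r$, the pathwise map $r\mapsto e^{-U^r_s}$ is a decreasing, convex exponential, $\mE[e^{-U^r_s}]=e^{f(r,s)}$ with $f$ as in \eqref{f}, and the two-sided bound \eqref{schranke} is available.

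From (i)--(ii) the ``soft'' properties are immediate. Monotonicity in $x$ follows from $\mathcal A(x_1)\subseteq\mathcal A(x_2)$ for $x_1\le x_2$. Concavity in $x$: for $C^i\in\mathcal A(x_i)$ one has $\lambda C^1+(1-\lambda)C^2\in\mathcal A(\lambda x_1+(1-\lambda)x_2)$ with $V^{\lambda C^1+(1-\lambda)C^2}(r)=\lambda V^{C^1}(r)+(1-\lambda)V^{C^2}(r)$, so taking suprema yields $V(r,\lambda x_1+(1-\lambda)x_2)\ge\lambda V(r,x_1)+(1-\lambda)V(r,x_2)$. In $r$: each $r\mapsto V^C(r)$ is decreasing and convex (an expectation, against $c_s\ge 0$, of decreasing convex exponentials), hence so is the supremum $V(\cdot,x)$; being finite by \eqref{schranke}, a convex function on all of $\R$ is automatically locally Lipschitz, which settles Lipschitz continuity in $r$. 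Finally $0\le V(r,x)\le\xi\int_0^\infty e^{f(r,s)}\,\md s$; for $r\ge b$ one has $f(r,s)\le -bs$, so $e^{-bs}$ dominates the integrand while $e^{f(r,s)}\to 0$ as $r\to\infty$ for each $s>0$, whence $\lim_{r\to\infty}V(r,x)=0$ by dominated convergence.

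Two assertions require real work. \emph{Strict monotonicity in $x$}: fix $0\le x_1<x_2$ and any $C^1\in\mathcal A(x_1)$. From $\int_0^t(\xi-c^1_s)\,\md s\ge(\xi-\mu)t-x_1\to\infty$ the time $\rho:=\inf\{t:\int_0^t(\xi-c^1_s)\,\md s=x_2-x_1\}$ is finite with $\rho\le M:=x_2/(\xi-\mu)$. Consuming at rate $\xi$ on $[0,\rho)$ and following $C^1$ afterwards produces $C^2\in\mathcal A(x_2)$ with $V^{C^2}(r)=V^{C^1}(r)+\mE\big[\int_0^\rho e^{-U^r_s}(\xi-c^1_s)\,\md s\big]$; on the $C^1$-independent event $A_K=\{\sup_{s\le M}U^r_s\le K\}$, of positive probability for $K$ large, the added term is at least $e^{-K}(x_2-x_1)$, so $V^{C^2}(r)\ge V^{C^1}(r)+\kappa$ with $\kappa:=e^{-K}(x_2-x_1)\mP(A_K)>0$ independent of $C^1$; taking the supremum over $C^1$ gives $V(r,x_2)\ge V(r,x_1)+\kappa$. \emph{Lipschitz continuity in $x$ up to the boundary $x=0$}: conversely, given $C\in\mathcal A(x_2)$, subtract the running overflow $\dot D_s$, where $D_t:=\big(\sup_{u\le t}\big(\int_0^u c_s\,\md s-\mu u-x_1\big)\big)^+\le x_2-x_1$, to obtain $\widetilde C\in\mathcal A(x_1)$ with $0\le V^C(r)-V^{\widetilde C}(r)=\mE\big[\int_0^\infty e^{-U^r_s}\dot D_s\,\md s\big]\le(x_2-x_1)\,g(r)$, where $g(r):=\mE\big[\sup_{s\ge 0}e^{-U^r_s}\big]$. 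A supremum over $C$ then gives $0\le V(r,x_2)-V(r,x_1)\le g(r)(x_2-x_1)$, so $V(r,\cdot)$ is Lipschitz with constant $g(r)$, locally bounded in $r$ since $g$ is decreasing; this includes $x=0$. (For $x>0$, local Lipschitz continuity in $x$ also follows directly from concavity and finiteness.)

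The genuine obstacle is thus the finiteness $g(r)=\mE\big[\sup_{s\ge 0}e^{-U^r_s}\big]<\infty$ (the pathwise surgeries above being routine to cast as adapted controls). I would obtain it from $\sup_{s\ge 0}e^{-U^r_s}\le\s_{n\ge 0}e^{-\inf_{s\in[n,n+1]}U^r_s}\le\s_{n\ge 0}e^{-U^r_n}\,e^{\int_n^{n+1}|r_u|\,\md u}$: by the Markov property $\mE\big[e^{\int_n^{n+1}|r_u|\,\md u}\mid\mathcal F_n\big]$ depends only on $r_n$ and is $\le e^{c(1+|r_n|)}$, so the $n$-th summand is at most $e^{c}\,\mE[e^{-U^r_n}e^{c|r_n|}]$; since $(U^r_n,r_n)$ is jointly Gaussian with $\mE[e^{-U^r_n}]=e^{f(r,n)}$ and $\mathrm{Cov}(U^r_n,r_n)=O(1)$, a direct log-Laplace computation bounds this by $C\,e^{-bn}$ up to sub-exponential corrections, and the series converges precisely because $b=\tilde b-\tfrac{\tilde\sigma^2}{2a^2}>0$. (If only continuity of $V(r,\cdot)$ at $0$ is wanted, one may replace $g(r)$ by the uniform modulus of continuity built from $\mE[\sup_{s\le T}e^{-U^r_s}]<\infty$ for each fixed $T$ and $\int_T^\infty e^{f(r,s)}\,\md s\to 0$ as $T\to\infty$.)
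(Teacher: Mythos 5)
Your proof is essentially correct but reaches the two hard estimates by a genuinely different route than the paper. The soft parts coincide (monotonicity and concavity in $x$ from the structure of the admissible set; $\lim_{r\to\infty}V=0$ from a dominating bound). For the $r$-direction, the paper proves local Lipschitz continuity directly, comparing $\varepsilon$-optimal strategies via the explicit factor $e^{-\frac ha(1-e^{-as})}$, and proves convexity separately; you prove pathwise convexity of $r\mapsto e^{-U^r_s}$ once and then invoke the fact that a finite convex function is locally Lipschitz. That is cleaner and subsumes the paper's difference-quotient estimates (at the cost of losing the explicit constant $\frac 1a\tilde V^C$, which the paper later reuses). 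For the $x$-direction the divergence is more substantial: the paper delays the $(r,x+h)$-strategy by $h/\mu$ and controls the resulting difference through the conditional bond price $\Theta(r,s,y)=\mE[e^{-U^r_s}\mid r_s=y]$ and explicit Gaussian computations, producing the concrete Lipschitz constant involving $\Lambda$; you instead perform a Skorokhod-type reflection of the consumption and reduce everything to the single lemma $g(r)=\mE\big[\sup_{s\ge0}e^{-U^r_s}\big]<\infty$. Your reduction is structurally more transparent and handles the boundary $x=0$ uniformly, but it trades the paper's explicit constants for a running-supremum exponential moment that the paper never needs. You also supply a strictness argument for monotonicity in $x$ that the paper does not (the paper only shows $V$ is non-decreasing), which is a genuine improvement on the written proof.

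Two caveats. First, the lemma $g(r)<\infty$ is the load-bearing step of your $x$-Lipschitz argument and you only sketch it; the sketch (blocking $[n,n+1]$, Markov property, joint Gaussianity of $(U^r_n,r_n)$ with bounded covariance, and $\mE[e^{-U^r_n}]\asymp e^{-bn}$ with $b>0$) is sound, but the conditional bound $\mE\big[e^{\int_n^{n+1}|r_u|\md u}\mid\mathcal F_n\big]\le e^{c(1+|r_n|)}$ needs a Fernique/Gaussian-tail justification to be a complete proof. Second, your strict monotonicity argument requires $\xi>\mu$ for the time $\rho$ to be finite; when $\xi\le\mu$ the value function is independent of $x$ (as the paper's own candidate $V^\xi$ shows), so strictness fails there and the hypothesis should be stated --- this is a defect of the proposition as formulated rather than of your argument.
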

\begin{proof}
$\bullet$  Let at first $h>0$, $r\in \R$ and $C$ be an admissible $\varepsilon$-optimal strategy for $(r+h,x)$. Then, $C$ is also an admissible strategy for $(r,x)$ (the argument works also the other way round) and it holds
\begin{align*}
V(r+h,x)-V(r,x)&\le V^C(r+h,x)+\varepsilon-V^C(r,x)
\\&=\mE\Big[\int_0^\infty e^{-U^r_s}c_s\big(e^{-\frac ha (1-e^{-as})}-1\big)\md s\Big]+\varepsilon\le 0\;.
\end{align*}
Considering an $\varepsilon$ optimal strategy for $(r,x)$ and applying the same arguments yields
\[
V(r+h,x)-V(r,x)\ge- V(r,x) \frac ha\ge -\frac {h\xi}{ba} \exp\Big(-\min\Big(\frac{r-b}a,0\Big)\Big)\;.
\]
Thus, $V$ is locally Lipschitz continuous and in particular continuous in $r$. 
\medskip
\\$\bullet$ For $r,q\in\R$, $\lambda\in(0,1)$ let $z=\lambda r+(1-\lambda)q$ and $\tilde C$ be an $\varepsilon$-optimal strategy for $(z,x)$. Then,
\begin{align*}
V(z,x)-\varepsilon\le V^{\tilde C}(z,x)&=\int_0^\infty e^{-U^z_s} \tilde c_s\md s= \int_0^\infty e^{-\lambda U^r_s-(1-\lambda)U^q_s} \tilde c_s\md s
\\&\le \lambda \int_0^\infty e^{-U^r_s} \tilde c_s\md s+ (1-\lambda)\int_0^\infty e^{-U^q_s} \tilde c_s\md s\;.
\end{align*}
Note that $\tilde C$ is an admissible strategy for $(r,x)$ as well as for $(q,x)$. Thus,
\[
V(z,x)\le \lambda V(r,x)+(1-\lambda) V(q,x)\;,
\]
i.e. $V$ is convex in $r$. 
\medskip
\\$\bullet$ For every $h>0$, it is clear that an admissible strategy for $(r,x)\in\R\times \R_+$ is also admissible for $(r,x+h)$, which implies that $V$ is increasing in the $x$ component.
\\On the other hand, let $C$ be an $\varepsilon$-optimal strategy for the starting point $(r,x+h)$ and define $\tilde C=\{\tilde c_s\}$ to be
\[
\tilde c_s=\begin{cases}
0 & s<\frac h\mu\\
c_{s-\frac h\mu} & s\ge \frac h\mu
\end{cases}\;.
\]
Obviously, $\tilde C$ is an admissible strategy for the starting point $(r,x)$. Then, we obtain
\begin{align*}
V(r,x+h)-V(r,x)&\le V^C(r,x+h)+\varepsilon - V^{\tilde C}(r,x)
\\&=\mE\Big[\int_0^\infty e^{-U_s^r}c_s\md s\Big]-\mE\Big[\int_{h/\mu}^\infty e^{-U_s^r}c_{s-h/\mu}\md s\Big]+\varepsilon
\\&=\mE\Big[\int_0^\infty e^{-U_s^r}c_s \big\{1-e^{-\int_0^{h/\mu} r_{s+u}\md u}\big\}\md s\Big]+\varepsilon\;.
\end{align*}
Let $\tilde U_{h/\mu}^{r_s}:=\int_0^{h/\mu} r_{s+u}\md u$, and note that $\tilde U_{h/\mu}^{r_s}$ depends on $U_s^r$ just via $r_s$. Then noting that the random variable $r_s$ is normally distributed (with mean $re^{-as}+\tilde b(1-e^{-as})$ and variance $\frac{\tilde \sigma^2}{2a}(1-e^{-2as})$), using $1-e^{x}\le -x$ and the definition of $f$ in \eqref{f}, we obtain the following estimation 
\begin{align*}
\mE\Big[\int_0^\infty e^{-U_s^r}c_s &\big\{1-e^{-\tilde U_{h/\mu}^{ r_{s}}}\big\}\md s\Big]=\int_0^\infty \mE\Big[\mE\big[e^{-U_s^r}c_s \big(1-e^{-\tilde U_{h/\mu}^{r_s}}\big)|r_s\big]\Big]\md s
\\&= \int_0^\infty \mE\Big[\mE\big[e^{-U_s^r}c_s|r_s\big] \Big\{1-e^{f(r_s,\frac h\mu)}\Big\}\Big]\md s
\\&\le -\int_0^\infty \mE\Big[\mE\big[e^{-U_s^r}c_s|r_s\big] f\Big(r_s,\frac h\mu\Big) \Big]\md s
\\&= \int_0^\infty \mE\Big[e^{-U_s^r}c_s \Big\{b\frac h\mu+\frac{\sigma^2}{2a^2}(1-e^{-ah/\mu})^2+\frac{r_s-b}a(1-e^{-ah/\mu})\Big\}\Big]\md s 
\\&\le \Big(b+\frac{\sigma^2}{2a}\Big)\frac {h\xi}{b\mu} e^{-\min\big(\frac{r-b}a,0\big)}+\int_0^\infty \mE\Big[e^{-U_s^r}c_s \frac{r_s-b}a\big(1-e^{-ah/\mu}\big)\Big]\md s \;.
\end{align*}
Consider now the function $\Theta(r,s,y):=\mE\big[e^{-U_s^r}|r_s=y\big]$. Using Borodin and Salminen, \cite[p. 525]{bs}, one finds
\begin{align*}
\Theta(r,s,y)&=\exp\Big\{-\tilde bs-\frac{r+y-2\tilde b}a\,{\rm tanh}\Big(\frac{as}2\Big)+\frac{\sigma^2}{a^2}\Big(as-2\,{\rm tanh}\Big(\frac{as}2\Big)\Big)\Big\}
\\&= \exp\Big\{-bs-\frac{r- b}a\,{\rm tanh}\Big(\frac{as}2\Big)-\frac{y-  b}a\,{\rm tanh}\Big(\frac{as}2\Big)\Big\}
\\&\le \exp\Big\{-bs-\min\Big(\frac{r- b}a,0\Big)-\min\Big(\frac{y-b}a,0\Big)\Big\}\;.
\end{align*}
Thus, it holds
\begin{align*}
\int_0^\infty \mE\Big[e^{-U_s^r}c_s \frac{r_s-b}a\big(1-e^{-ah/\mu}\big)\Big]\md s&\le \frac {h\xi}\mu\int_0^\infty \mE\Big[\mE\big[e^{-U_s^r}|r_s\big]\cdot (r_s-b)\one_{[r_s>b]}\Big]\md s
\\& \le \frac {h\xi}\mu e^{-\min\big(\frac{r- b}a,0\big)}\int_{0}^\infty e^{-bs}\mE\big[(r_s-b)\one_{[r_s>b]}\big]\md s\;.
\end{align*}
Note that since $r_s$ is normally distributed, the expected value above can be estimated as follows
\begin{align*}
\mE\big[(r_s-b)\one_{[r_s>b]}\big]&=\frac{(r-\tilde b)e^{-as}+\tilde b-b}2\bigg(1+{\rm erf}\bigg(\frac{(r-\tilde b)e^{-as}+\tilde b-b}{\sigma\sqrt{2(1-e^{-2as})}}\bigg)\bigg)
\\&\quad{}+\frac{\sigma\sqrt{1-e^{-2as}}}{\sqrt{2\pi}}e^{-\frac{((r-\tilde b)e^{-as}+\tilde b-b)^2}{2(1-e^{-2as})\sigma^2}}
\\&\le \sigma+\begin{cases} 
(r-\tilde b)e^{-as}+\tilde b-b &\mbox{: for all $s\ge -\ln\big(\frac{\tilde b-b}{\tilde b-r}\big)/a$ and $r\le b$,} \\
(r-\tilde b)e^{-as}+\tilde b-b &\mbox{: for all $s\ge 0$ and $r> b$,}\\
0 &\mbox{: otherwise.}
\end{cases} 
\end{align*}
Thus, defining 
\[
\Lambda:= \frac{\sigma(a+b)}b+a\frac{\tilde b-b}{b}+\frac {(a+b)}b\big(b+\frac{\sigma^2}{2a}\big)
\]
we obtain
\[
V(r,x+h)-V(r,x)\le  \frac{h\xi}{\mu(a+b)}\Big(\max(r-b,0)+\Lambda\Big)e^{-\min\big(\frac{r- b}a,0\big)}\;.
\]
\medskip
\\$\bullet$ In order to prove the convexity in the $x$ component, let $x,y\ge0$, $C^x$ be an $\varepsilon$-optimal strategy for $(r,x)$ and $C^y$ be an $\varepsilon$-optimal strategy for $(r,y)$. Then, for $z=\lambda x+(1-\lambda)y$:
\[
0\le \lambda\big(x+\mu t-C^x_t\big)+(1-\lambda)\big(y+\mu t-C^y_t\big)=z+\mu t-\big(\lambda C^x_t+(1-\lambda)C^y_t\big)\;.
\]
Thus, $\lambda C^x+(1-\lambda)C^y$ is an admissible strategy for $(r,z)$. Since $\varepsilon$ was arbitrary, we can conclude
\[
\lambda V(r,x)+(1-\lambda) V(r,y)\le V(r,z)\;,
\]
i.e. $V$ is concave in $x$.
\\Further, we know that the value function is bounded, and using the monotone convergence theorem (since $f(r,s)$ is decreasing in $r$) we obtain
\[
\lim\limits_{r\to\infty} V(r,x)\le\lim\limits_{r\to\infty}\xi\int_0^\infty e^{f(r,s)}\md s=0\;.
\]
\medskip
\\$\bullet$ Estimation of the difference quotient of the value function with respect to $r$.
\\Define now an auxiliary function $\tilde V^C(r,x):= \mE\Big[\int_0^\infty e^{-U_s^r}c_s(1-e^{-as})\md s\Big]$ and let $C$ be an admissible strategy, $h>0$. Then
\begin{align*}
V^C(r+h,x)&=\mE\Big[\int_0^\infty e^{-U_s^{r+h}}c_s\md s\Big]=\mE\Big[\int_0^\infty e^{-U_s^{r}}c_s e^{-\frac ha(1-e^{-as})}\md s\Big]
 \\&\ge V^C(r,x)-\frac ha \tilde V^C(r,x)\,,
\\V^{C}(r,x)&=\mE\Big[\int_0^\infty e^{-U_s^{r}} c_s\md s\Big]
=\mE\Big[\int_0^\infty e^{-U_s^{r+h}} c_se^{\frac ha(1-e^{-as})}\md s\Big]
\\&\ge V^C(r+h,x)+\frac ha \tilde V^C(r+h,x)\;.
\end{align*}
Let $h>0$ and $C$ an $h^2$-optimal strategy for $(r,x)$, then
\begin{align*}
\frac {1}a \tilde V^{C}(r,x)+h\ge \frac{V^{C}(r,x)-V^{C}(r+{h},x)}{h}+h\ge \frac{V(r,x)-V(r+h,x)}{h}\;.
\end{align*}
Since, $V$ is convex in $r$ we obtain
\begin{align*}
&\frac{V(r,x)-V(r+h,x)}{h}\ge \frac{V(r-h,x)-V(r,x)}{h}
\\&\quad\ge \frac{V^{C}(r-h,x)-V^{C}(r,x)}{h}-h\ge \frac 1a \tilde V^{C}(r,x)-h\;.
\end{align*}
\end{proof}
It has been shown that the value function is convex in $r$ and concave in $x$. 
We conjecture that the optimal strategy is of a barrier type, i.e. we pay on the maximal rate above some barrier and do nothing below this barrier, whereas the barrier for $x$ should be equal to $0$ and the barrier for $r$ should be given by some constant $r^*$. Then, we have to consider two functions, describing the value function above and below the barrier. 
Unfortunately, we were not able to find a closed expression for a return function corresponding to such a barrier strategy.
That is why, we switch to the viscosity ansatz.
\begin{Def}\label{def:1}
\rm We say that a continuous function
$\underline{u}:\R\times\R_+\to \R_+$ is a viscosity
subsolution to \eqref{hjb} at $(r,x)\in\R\times\R_+$ if
any function $\psi\in C^{2,1}\Big(\R\times\R_+,\R_+\Big)$ with $\psi(r,x)=\underline{u}(r,x)$ such that $\underline{u}-\psi$ reaches the maximum at $(\bar r,\bar x)$ satisfies
\[
\mu\psi_x+a(\tilde b-r)\psi_r+\frac{\tilde \sigma^2}{2}\psi_{rr}-r\psi+\sup\limits_{0\le c\le\xi} c\big(1-\psi_x\big)\ge0
\]
and we say that a continuous function $\bar{u}:\R\times\R_+\to \R_+$ is a viscosity supersolution to \eqref{hjb3}
at $(r,x)\in\R\times\R_+$ if any function $\phi\in C^{2,1}\Big(\R\times\R_+,\R_+\Big)$ with $\phi(\bar r,\bar x)=\bar{u}(\bar r,\bar x)$ such
that $\bar{u}-\phi$ reaches the minimum at $(\bar r,\bar x)$ satisfies
\[
\mu\phi_x+a(\tilde b-r)\phi_r+\frac{\tilde\sigma^2}{2}\phi_{rr}-r\phi+\sup\limits_{0\le c\le\xi} c\big(1-\phi_x\big)\le 0\;.
\]
A viscosity solution to \eqref{hjb3} is a continuous function $u:\R\times\R_+\to \R_+$ if it is both a viscosity subsolution and a viscosity supersolution at any
$(r,x)\in \R\times \R_+$.
\end{Def}
\begin{Satz}
The value function $V(r,x)$ is a viscosity solution to \eqref{hjb3}.
\end{Satz}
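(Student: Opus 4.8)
\textit{Proof proposal.}

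The plan is the standard two-step programme for verifying that a value function is a viscosity solution: first establish a dynamic programming principle (DPP) for $V$, then read off the sub- and supersolution inequalities from it, using the continuity of $V$ already supplied by Proposition \ref{properties2}. The DPP I would prove is that for every $(r,x)\in\R\times\R_+$ and every $\{\mathcal F_s\}$-stopping time $\tau$,
\[
V(r,x)=\sup_C\mE\Big[\int_0^\tau e^{-U^r_s}c_s\md s+e^{-U^r_\tau}V(r_\tau,X^C_\tau)\Big].
\]
The inequality ``$\le$'' follows by concatenating an arbitrary admissible $C$ on $[0,\tau]$ with an $\varepsilon$-optimal strategy for the random state $(r_\tau,X^C_\tau)$: the concatenation is admissible since the second piece keeps the surplus nonnegative, and a measurable selection is needed to realise the $\varepsilon$-optimal continuation as an adapted process (as in Fleming and Soner \cite{fleming}). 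The inequality ``$\ge$'' is a conditioning argument via the tower property and the strong Markov property of $\{r_s\}$; the integrability needed for both directions is supplied by the two-sided bounds \eqref{schranke} and the boundedness of the discount factor on bounded time intervals.

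For the supersolution property, fix $(\bar r,\bar x)$ and $\phi\in C^{2,1}$ with $\phi(\bar r,\bar x)=V(\bar r,\bar x)$ and $V-\phi$ attaining a local minimum at $(\bar r,\bar x)$, and let $\theta$ be the first exit time of $(r_s,X^C_s)$ from a small ball $B$ around $(\bar r,\bar x)$. Using the constant control $c_s\equiv c\in[0,\xi]$ on $[0,h\w\theta]$ in the ``$\ge$'' part of the DPP, together with $V\ge\phi$ on $B$, gives $\phi(\bar r,\bar x)\ge\mE[\int_0^{h\w\theta}e^{-U^{\bar r}_s}c\md s+e^{-U^{\bar r}_{h\w\theta}}\phi(r_{h\w\theta},X_{h\w\theta})]$. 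Applying It\^o's formula to $e^{-U^{\bar r}_s}\phi(r_s,X_s)$ (the local-martingale part has zero expectation after localisation, by boundedness of $\phi$ and its derivatives on $B$), subtracting $\phi(\bar r,\bar x)$, dividing by $h$ and letting $h\downarrow0$ yields $0\ge\mu\phi_x+a(\tilde b-\bar r)\phi_r+\tfrac{\tilde\sigma^2}{2}\phi_{rr}-\bar r\phi+c(1-\phi_x)$; taking the supremum over $c\in[0,\xi]$ gives the required inequality.

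For the subsolution property, fix $(\bar r,\bar x)$ and $\psi\in C^{2,1}$ with $\psi(\bar r,\bar x)=V(\bar r,\bar x)$ and $V-\psi$ attaining a local maximum at $(\bar r,\bar x)$, and suppose towards a contradiction that $\mu\psi_x+a(\tilde b-r)\psi_r+\tfrac{\tilde\sigma^2}{2}\psi_{rr}-r\psi+\sup_{0\le c\le\xi}c(1-\psi_x)\le-\eta<0$ on a ball $B$ around $(\bar r,\bar x)$. Take an $\varepsilon$-optimal strategy $C$ for $(\bar r,\bar x)$ and let $\theta$ be the first exit of $(r_s,X^C_s)$ from $B$; since $|X^C_s-\bar x|\le(\mu+\xi)s$ deterministically, $\theta$ is bounded below by an exit time that does not depend on $C$. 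As the continuation of $C$ after $h\w\theta$ is admissible, $V^C(\bar r,\bar x)\le\mE[\int_0^{h\w\theta}e^{-U^{\bar r}_s}c_s\md s+e^{-U^{\bar r}_{h\w\theta}}V(r_{h\w\theta},X^C_{h\w\theta})]$; bounding $V\le\psi$ on $B$, applying It\^o to $e^{-U^{\bar r}_s}\psi$, and using the standing strict inequality gives $V^C(\bar r,\bar x)\le\psi(\bar r,\bar x)-\eta\,\mE[\int_0^{h\w\theta}e^{-U^{\bar r}_s}\md s]=V(\bar r,\bar x)-\eta\,\mE[\int_0^{h\w\theta}e^{-U^{\bar r}_s}\md s]$. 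Since $\mE[\int_0^{h\w\theta}e^{-U^{\bar r}_s}\md s]$ is bounded below by a positive constant depending only on $h$ and $B$ (not on $C$), combining with $V^C\ge V-\varepsilon$ and choosing $\varepsilon$ small enough produces the desired contradiction.

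The main obstacle I anticipate is twofold. First, making the DPP fully rigorous requires the usual measurable-selection machinery for the $\varepsilon$-optimal continuation together with careful integrability bookkeeping, which is where the bounds \eqref{schranke} enter. Second, and more genuinely delicate, is the behaviour at the spatial boundary $x=0$: there the set of feasible controls shrinks (a constant rate $c>\mu$ instantly violates $X^C\ge0$), so the constant-control test in the supersolution step directly yields only the inequality with the supremum taken over $c\in[0,\mu]$. One must argue that this suffices --- either by restricting the notion of viscosity solution to the interior $\R\times(0,\infty)$, or by a separate argument at $x=0$ using the structure of $f(r,\cdot)$ and the concavity of $V$ in $x$ --- before the viscosity inequalities can be asserted on all of $\R\times\R_+$.
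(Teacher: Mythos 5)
Your proposal is correct and follows the same overall template as the paper --- constant controls plus It\^o's formula for the supersolution inequality, and a contradiction argument on a small ball for the subsolution inequality --- but it differs in two technical respects worth noting. First, where you would establish a full two-sided dynamic programming principle up front (invoking measurable selection for the $\varepsilon$-optimal continuation), the paper proves only the two one-sided inequalities it actually needs: the ``$\ge$'' direction is obtained by an explicit concatenation in which a neighbourhood of $(\bar r,\bar x)$ is partitioned into a finite grid and an $\varepsilon/2$-optimal strategy $C^k$ is attached at each grid point, the local Lipschitz continuity of $V$ from Proposition \ref{properties2} guaranteeing that $V^{C^k}$ is $\varepsilon$-optimal on the whole cell; this sidesteps measurable selection entirely. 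Second, in the subsolution step the paper does not extract the contradiction from a lower bound on the expected discounted occupation time $\mE\big[\int_0^{h\wedge\theta}e^{-U^{\bar r}_s}\md s\big]$ as you do; instead it perturbs the test function to $\psi=\psi_0+q(x-\bar x)^4+q(r-\bar r)^4$, so that $\psi\ge V+\Delta\varepsilon$ at the exit from the ball, and the resulting gap $\Delta\varepsilon\,\mE\big[e^{-U^{\bar r}_\tau}\big]$, bounded below via \eqref{absch:4}, supplies the contradiction. Both devices achieve the same end, and your occupation-time bound is sound because the exit time is bounded below by quantities independent of the strategy. Finally, your observation about the spatial boundary is well taken: at $x=0$ only $c\in[0,\mu]$ is feasible for the constant-control test, and the paper in fact silently restricts to $\bar x>0$ in the supersolution argument; that this suffices is consistent with the subsequent comparison result, which imposes $u(r,0)\le v(r,0)$ as a separate boundary condition, so the viscosity property is only ever used in the interior.
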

\begin{proof} 
Let $(\bar r,\bar x)\in\R\times\R_+$, $\bar x>0$, $0<h<\bar x$ and $\{X_t^c\}$ the surplus process under the constant strategy $c\in[0,\xi]$. Further, we let $\tau_1:=\inf\{t\ge 0: \, X^c_t\notin\big(\bar x-h,\bar x+h\big)\}$, $\tau_2:=\inf\{t\ge 0: \, r_t\notin\big(\bar r-h,\bar r+h\big)\}$ and $\tau=\tau_1\w\tau_2$.
\\Since, the value function $V$ is locally Lipschitz continuous, there is an $n\in\N$ such that $V(r,x)-V(r_k,x_k)\le \varepsilon/2$ for $(r,x)\in [r_{k-1},r_k]\times[x_k,x_{k+1}]$, some $\varepsilon>0$ and $r_k:=\bar r-h+\frac{2h(k+1)}n$ and $x_k:=x-h+\frac{2hk}{n}$ for $k\in\N$.
Let now $C^k$ be an $\varepsilon/2$-optimal strategy for the starting point $(r_k,x_k)$. Like in Proposition \eqref{properties2}, one can show that the return function $V^{C^k}$, corresponding to the strategy $C^k$, can be applied on the initial value $(r_{\tau \w t},X_{\tau\w t}^c)$.
In particular, if $\big(r_{\tau\w t},X^c_{\tau\w t}\big)\in [r_{k-1},r_k]\times [x_k,x_{k+1}]$
\begin{align*}
V^{C^k}\big(r_{\tau\w t},X^c_{\tau\w t}\big)&\ge V^{C^k}(r_k,x_k)
\ge V(r_k,x_k)-\varepsilon/2
\ge V(r_{\tau\w t},X^c_{\tau\w t})-\varepsilon\;.
\end{align*}
Thus, for every $c\in[0,\xi]$ and a given $\varepsilon>0$ we can find a measurable strategy $C$ such that $V^{ C}\big(r_{\tau\w t},X_{\tau\w t}^c\big)\ge V\big(r_{\tau\w t},X_{\tau\w t}^c\big)-\varepsilon$.\medskip
\\At first, we show that $V$ is a supersolution.
Construct now a strategy $\tilde C=\{\tilde c_s\}$ in the following way: let $\tau $ be defined like above, $c\in[0,\xi]$ and $t\in[0,\infty)$ be fixed, define $\tilde c_s=c$ for $s\le\tau\w t$; and if $\big(r_{\tau\w t},X_{\tau\w t}^c\big)\in[r_{k-1},r_{k}]\times[x_k,x_{k+1}]$ choose from $\tau\w t$ on the strategy $C^k$, i.e. $c^k_{s-\tau\w t}=\tilde c_s$ for $s>\tau\w t$. Obviously, the constructed strategy $\tilde C$ is an admissible one.
\\Let $\phi$ be a twice continuously differentiable with respect to $r$ and once continuously differentiable with respect to $x$ test function, i.e. $V(r,x)\ge \phi(r,x)$ for all $(r,x)\in\R\times\R_+$ and $V(\bar r,\bar x)=\phi(\bar r,\bar x)$.
Since $\phi$ is smooth enough, we obtain 
\begin{equation}
\label{phi}
\begin{split}
\lim\limits_{t\to0}\mE\Big[\frac{e^{-U^{\bar r}_{\tau\w t}}\phi\big(r_{\tau\w t},x+(\mu-c)\tau\w t\big)-\phi(\bar r,\bar x)}{\tau\w t}\Big]&=(\mu-c)\phi_x(\bar r,\bar x)+a(\tilde b-\bar r)\phi_r(\bar r,\bar x)
\\&\quad{}+\frac{\tilde \sigma^2}2 \phi_{rr}(\bar r,\bar x)-\bar r\phi(\bar r,\bar x)\;.
\end{split}
\end{equation}
Further, it holds for the constructed strategy $\tilde C$:
\begin{align*}
\phi(\bar r,\bar x)=V(\bar r,\bar x)&\ge V^{\tilde C}(\bar r,\bar x)\ge c\mE\Big[\int_0^{\tau\w t} e^{-U^{\bar r}_s} \md s\Big]+\mE\Big[e^{-U_{\tau\w t}^{\bar r}}\big(V(r_{\tau\w t},X_{\tau\w t}^c)-\varepsilon\big)\Big] 
\\& \ge c \int_0^{t} \mE\big[e^{-U^{\bar r}_s}\big] \md s + \mE\Big[e^{-U_{\tau\w t}^{\bar r}}\phi(r_{\tau\w t},X_{\tau\w t}^c)\Big]-\varepsilon \mE\big[e^{-U_{\tau\w t}^{\bar r}}\big]\;.
\end{align*}
Since, the expected value $\mE\big[e^{-U_{\tau\w t}^{\bar r}}\big]$ is bounded due to the definition of $\tau$ and $\varepsilon$ was arbitrary, we have
\[
\phi(\bar r,\bar x)\ge c \int_0^{t} e^{f(r,s)} \md s + \mE\Big[e^{-U_{\tau\w t}^{\bar r}}\phi(r_{\tau\w t},X_{\tau\w t}^c)\Big]\;.
\]
In the next step, we rearrange the terms in the above inequality and divide it by $\tau\w t$. Letting $t$ go to $0$ in the above inequality yields
\[
0 \ge \mu \phi_x(\bar r,\bar x)+a(\tilde b-\bar r)\phi_r(\bar r,\bar x)+\frac{\tilde \sigma^2}{2} \phi_{rr}(\bar r,\bar x)-\bar r\phi(\bar r,\bar x)+\sup\limits_{0\le  c\le \xi}c\big(1-\phi_x(\bar r,\bar x)\big)\;,
\]
which yields the desired result.
\smallskip
\\It remains to show that $V$ is a subsolution.
Here, as usual we use the proof by contradiction. It means, we assume that $V$ is not a subsolution to \eqref{hjb3} at some $(\bar r,\bar x)$.
In particular, there is an $q>0$ and an $C^{2,1}(\R\times\R_+,\R_+)$ function $\psi_0$ such that $\psi_0(\bar r,\bar x)=V(\bar r,\bar x)$, $\psi_0(r,x)\ge V(r,x)$ for $(r,x)\in\R\times\R_+$ and $L(\psi_0)(\bar r,\bar x)<-2q$, where for some $g\in C^{2,1}\Big(\R\times\R_+,\R_+\Big)$
\begin{align*}
&L(g)(r,x):= \sup\limits_{0\le c\le\xi} \tilde L(g)(r,x)
\\&\tilde L(g)(r,x):=\mu g_x(r,x)+a(\tilde b-r)g_r(r,x)+\frac{\tilde \sigma^2}{2}g_{rr}(r,x)+c\big(1-g_x(r,x)\big)\;.
\end{align*}
Define further $\psi(r,x)=\psi_0(r,x)+ q(x-\bar x)^4+q(r-\bar r)^4$. Then, $\psi\in C^{2,1}(\R\times\R_+,\R_+)$ and $\psi(\bar r,\bar x)=V(\bar r,\bar x)$, 
\[
\psi(r,x)\ge V(r,x)+ q(x-\bar x)^4+q(r-\bar r)^4
\]
for all $(r,x)\in\R\times\R_+$. Furthermore,
\begin{align*}
L(\psi)(\bar r,\bar x)= L(\psi_0)(\bar r,\bar x)<-2q\;.
\end{align*}
Since $\psi\in C^{2,1}(\R\times \R_+,\R_+)$, the function $L(\psi)$ is continuous, such that one can find an $h>0$ with $L(\psi)(r,x)<-q$ for $(r,x)\in B_{\sqrt 2h}(\bar r,\bar x)$.
W.l.o.g. assume $\bar r>0$ and $0<h<\bar r$ and define $\Delta:=\frac{e^{(\bar r+h)h/\mu}}{\bar r-h}$ and 
\[
\varepsilon=\min\Big\{\frac{qh^4}{\Delta},q\Big\}\;.
\]
Let further $C$ be an arbitrary admissible strategy with $X^C_t=\hat X_t$, $\tau$ be defined like above.
Note, that $(r_\tau,\hat X_\tau)\in[\bar r-h,\bar r+h]\times[\bar x-h,\bar x+h]$, because the paths are continuous. Thus, we obtain
\[
V(r_\tau,\hat X_\tau)\le \psi(r_\tau,\hat X_\tau)-\Delta\varepsilon\;.
\] 
Obviously, 
\begin{align*}
L(\psi)\big(r_s,\hat X_s\big)\ge \tilde L(\psi)\big(r_s,\hat X_s\big)\;.
\end{align*}
Consider now the function $\psi$. It holds via Ito's formula
\begin{align*}
e^{-U_\tau^{\bar r}}\psi(r_\tau,\hat X_\tau)-\psi(\bar r,\bar x)&=\int_0^\tau e^{-U_s^{\bar r}}\Big\{\tilde L(\psi)\big(r_s,\hat X_s\big)-c_s\Big\}\md s
+\tilde\sigma\int_0^\tau e^{-U_s^{\bar r}}\psi_r\big(r_s,\hat X_s\big) \md W_s
\\&\le \int_0^\tau e^{-U_s^{\bar r}}L(\psi)\big(r_s,\hat X_s\big)\md s-\int_0^\tau e^{-U_s^{\bar r}}c_s\md s
\\&\quad {} + \tilde\sigma\int_0^\tau e^{-U_s^{\bar r}}\psi_r\big(r_s,\hat X_s\big) \md W_s\;. 
\end{align*}
Using $\psi(r_\tau,\hat X_\tau)\ge V(r_\tau,\hat X_\tau)+\Delta \varepsilon$ and $L(\psi)(r_s,\hat X_s)\le -\varepsilon$, we obtain
\begin{align*}
e^{-U_\tau^{\bar r}}\big(V(r_\tau,\hat X_\tau)+\Delta\varepsilon\big)-\psi(\bar r,\bar x)&\le -\varepsilon\int_0^\tau e^{-U_s^{\bar r}}\md s-\int_0^\tau c_s e^{-U_s^{\bar r}}\md s
\\&\quad{}+
 \tilde\sigma\int_0^\tau e^{-U_s^{\bar r}}\psi_r(r_s,\hat X_s) \md W_s\;.
\end{align*}
This means in particular
\begin{align*}
\int_0^\tau c_s e^{-U_s^{\bar r}}\md s+e^{-U_\tau^{\bar r}}V(r_\tau,\hat X_\tau)-\psi(\bar r,\bar x)&\le{} -\varepsilon\int_0^\tau e^{-U_s^{\bar r}}\md s-\Delta\varepsilon e^{-U_\tau^{\bar r}}
\\&\quad{}+
 \tilde\sigma\int_0^\tau e^{-U_s^{\bar r}}\psi_r(r_s,\hat X_s) \md W_s\;.
\end{align*}
Since $\psi_r(r_s,\hat X_s)$ is bounded for $s\in[0,\tau]$ and $\tau$ is a.s. finite, the stochastic integral above has expectation $0$. We can estimate the terms on the right hand side of the above inequality as follows
\begin{equation}
\label{absch:4}
\begin{split}
&\mE\Big[\int_0^\tau e^{-U_s^{\bar r}}\md s\Big]\le\mE\Big[\frac 1{\bar r-h}\big(1-e^{-(\bar r-h)\tau}\big)\Big]
\le  \frac 1{\bar r-h}\;,
\\&\mE\Big[e^{-U_\tau^{\bar r}}\Big]\ge\mE\Big[e^{-(\bar r+h)\tau}\Big]\ge e^{-(\bar r+h)\frac h \mu}\;.
\end{split}
\end{equation}
Thus, we already have shown
\[
\mE\Big[\int_0^\tau c_s e^{-U_s^{\bar r}}\md s+e^{-U_\tau^{\bar r}}V(r_\tau,\hat X_\tau)\Big]-\psi(\bar r,\bar x)\le \frac \varepsilon{\bar r-h} -\varepsilon \Delta e^{-(\bar r+h)\frac h \mu}=-\frac {2\varepsilon}{\bar r-h}\;.
\]
The same method can be applied also for $\bar r\le 0$ by just changing the estimations in \eqref{absch:4}.
\\Let $C=\{c_s\}$ be now an arbitrary admissible strategy for the starting point $(\bar r,\bar x)$, then the following estimation holds true:
\begin{align*}
V^C(\bar r,\bar x)&=\mE\Big[\int_0^\tau c_s e^{-U_s^{\bar r}}\md s+\int_\tau^\infty c_s e^{-U_s^{\bar r}}\md s\Big]
= \mE\Big[\int_0^\tau c_s e^{-U_s^{\bar r}}\md s+\int_0^\infty c_{s+\tau} e^{-U_{s+\tau}^{\bar r}}\md s\Big]
\\&\le \mE\Big[\int_0^\tau c_s e^{-U_s^{\bar r}}\md s+e^{-U_{\tau}^{\bar r}}V\big(r_\tau,X_\tau^C\big)\Big]\;.
\end{align*}
Now, we can build the supremum over all admissible strategies on the both sides of the above inequality. In particular, for every $\tilde\varepsilon>0$ there is an admissible strategy $\bar C=\{\bar c\}$ such that
\begin{align*}
\sup\limits_{C}\mE\Big[\int_0^\tau c_s e^{-U_s^{\bar r}}\md s+e^{-U_{\tau}^{\bar r}}V\big(r_\tau,X_\tau^C\big)\Big]\le \mE\Big[\int_0^\tau \bar c_s e^{-U_s^{\bar r}}\md s+e^{-U_{\tau}^{\bar r}}V\big(r_\tau,X_\tau^{\bar C}\big)\Big]+\tilde\varepsilon\;.
\end{align*}
Letting $\tilde\varepsilon=\frac{\varepsilon}{\bar r-h}$, we obtain then
\begin{align*}
V(\bar r,\bar x)- \psi(\bar r,\bar x)\le -\frac\varepsilon{\bar r-h}\;,
\end{align*}
which contradicts the assumption $\psi(\bar r,\bar x)=V(\bar r ,\bar x)$.
\end{proof}
The next result yields the uniqueness of the viscosity solution.

\begin{Satz}
Let $u$ be a sub- and $v$ a supersolution to HJB Equation \eqref{hjb3}, fulfilling the conditions from Proposition \ref{properties2}, \eqref{schranke} and $u(r,0)\le v(r,0)$ for all $r\in\R$. 
Then it holds $u(r,x)\le v(r,x)$ on $\R\times\R_+$. 
\end{Satz}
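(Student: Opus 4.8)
\noindent\textit{Proof strategy.}
The plan is to run the classical comparison argument for viscosity solutions --- doubling of the state variables together with the Crandall--Ishii maximum principle --- adapted to the unbounded state space $\R\times\R_+$ by a penalization tailored to the a priori bounds \eqref{schranke} and the qualitative properties collected in Proposition~\ref{properties2}. The only genuinely non-routine ingredient is the zeroth-order term $-r\,V$ of \eqref{hjb3}, whose coefficient is neither bounded nor of a fixed sign. I would absorb it by working with the weight $e^{-r/a}$: equivalently, setting $\tilde u:=e^{r/a}u$ and $\tilde v:=e^{r/a}v$, a one-line computation turns \eqref{hjb3} into an equation of the same structure --- with a (shifted) mean-reverting drift $a\bigl(\tilde b-\tfrac{\tilde\sigma^2}{a^2}-r\bigr)$ in the $r$-variable, the reward constant $1$ replaced by $e^{r/a}$, and the coefficient $-r$ replaced by the \emph{constant and strictly positive} discount rate $b=\tilde b-\tfrac{\tilde\sigma^2}{2a^2}>0$ (this is exactly where the standing hypothesis $\tilde b>\tfrac{\tilde\sigma^2}{2a^2}$ enters) --- for which $\tilde u$ (resp.\ $\tilde v$) is again a viscosity sub- (resp.\ super-) solution, the change of unknown being smooth and multiplicative. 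The bounds \eqref{schranke} and Proposition~\ref{properties2} translate into: $\tilde u,\tilde v\ge0$, $\tilde u,\tilde v$ bounded on each half-space $\{r\le R\}$ and of explicitly controlled growth as $r\to+\infty$, bounded in $x$, continuous, with $\tilde u(r,0)\le\tilde v(r,0)$. It suffices to prove $\tilde u\le\tilde v$ on $\R\times\R_+$.

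Suppose, for a contradiction, that $\theta:=\sup_{\R\times\R_+}(\tilde u-\tilde v)>0$. Choose a smooth weight $\Xi(r,x)=\zeta(r)+x$ with $\zeta>0$ growing (using the bounds above) faster than $\tilde u$ and $\tilde v$ as $r\to\pm\infty$, and for $\beta,\delta>0$ consider
\[
\Phi_{\beta,\delta}(r,x,s,y):=\tilde u(r,x)-\tilde v(s,y)-\frac{(r-s)^2+(x-y)^2}{2\delta}-\beta\bigl(\Xi(r,x)+\Xi(s,y)\bigr).
\]
For $\beta$ small one has $\sup\Phi_{\beta,\delta}\ge\theta/2$, and the growth control forces this supremum to be attained at a point $(r_\delta,x_\delta,s_\delta,y_\delta)$ lying, for fixed $\beta$, in a compact set independent of $\delta$. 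The usual lemma then gives $\delta^{-1}\bigl(|r_\delta-s_\delta|^2+|x_\delta-y_\delta|^2\bigr)\to0$ and, along a subsequence, $(r_\delta,x_\delta),(s_\delta,y_\delta)\to(\hat r,\hat x)$ with $(\tilde u-\tilde v)(\hat r,\hat x)\ge\theta/2$. If $\hat x=0$ this contradicts $\tilde u(\hat r,0)\le\tilde v(\hat r,0)$; hence $\hat x>0$, so $x_\delta,y_\delta>0$ for $\delta$ small and the maximum is interior (since $\mu>0$, no further boundary analysis at $\{x=0\}$ is needed).

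Now apply the theorem on sums: at $(r_\delta,x_\delta,s_\delta,y_\delta)$ it provides second-order test-jet data $(p,A)$ for $\tilde u$ and $(q,B)$ for $\tilde v$ with $p-q=\beta\bigl(\nabla\Xi(r_\delta,x_\delta)+\nabla\Xi(s_\delta,y_\delta)\bigr)$, hence $p_x-q_x=2\beta$, and --- since the diffusion acts only in $r$ --- the scalar bound $A_{rr}-B_{rr}\le\beta\bigl(\zeta''(r_\delta)+\zeta''(s_\delta)\bigr)+o_\delta(1)$. Combining the subsolution inequality for $\tilde u$ at $(r_\delta,x_\delta)$ with the supersolution inequality for $\tilde v$ at $(s_\delta,y_\delta)$, one obtains $0\le(\text{sum of the following})$: the $x$-drift contributes $2\mu\beta$; the Hamiltonian difference is bounded by $\xi\bigl(2\beta+|e^{r_\delta/a}-e^{s_\delta/a}|\bigr)=O(\beta)+o_\delta(1)$, the map $z\mapsto\sup_{0\le c\le\xi}c(m-z)$ being $\xi$-Lipschitz and $|r_\delta-s_\delta|\to0$; the mean-reverting cross term yields $-a\,\delta^{-1}(r_\delta-s_\delta)^2\le0$ plus an $O(\beta)$ error from $\beta\zeta'$; the second-order term is $\le\tfrac{\tilde\sigma^2}{2}\beta\bigl(\zeta''(r_\delta)+\zeta''(s_\delta)\bigr)+o_\delta(1)$; and --- the whole point of the change of unknown --- the zeroth-order term is the benign $-b\tilde u(r_\delta,x_\delta)+b\tilde v(s_\delta,y_\delta)=-b\bigl(\tilde u(r_\delta,x_\delta)-\tilde v(s_\delta,y_\delta)\bigr)\le-b\theta/2<0$. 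Letting $\delta\to0$ and then $\beta\to0$, all the $O(\beta)$ and $o_\delta(1)$ contributions vanish and one is left with $0\le-b\theta/2<0$, a contradiction. Therefore $\tilde u\le\tilde v$, i.e.\ $u\le v$ on $\R\times\R_+$.

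The crux --- the one place needing more than bookkeeping --- is the interaction of the unbounded $r$-axis with the non-coercive, sign-changing coefficient $-r$ and with the exponential growth of solutions as $r\to-\infty$: a plain quadratic penalization does not close the argument, and the weight must be taken at precisely the $e^{-r/a}$ scale (the scale of the discount factor $\mE[e^{-U^r_s}]$) so that one ends up comparing solutions of an equation with a genuinely positive discount rate. The auxiliary weight $\zeta$ in the doubling must then be chosen with enough growth to keep the maximizing sequence from escaping to infinity, yet so that $\beta\zeta'$ and $\beta\zeta''$ remain negligible in the limit; it is exactly the two-sided, explicit character of the estimates in \eqref{schranke} (in particular the decay of $V$ as $r\to+\infty$) that makes such a choice possible. (As a much easier companion fact, any supersolution with the growth of \eqref{schranke} already dominates every return function $V^C$ --- via an $e^{-U^r}$-weighted supermartingale estimate using the supersolution inequality --- hence $v\ge V$; the real content of the statement is the matching comparison against subsolutions.)
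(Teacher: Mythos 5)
Your strategy is sound and, at its core, rests on the same key idea as the paper's proof: the weight $e^{r/a}$ (the paper works with $\sup e^{\frac{r-b}{a}}\{u-v^s\}$, which is your change of unknown up to the constant $e^{-b/a}$) is exactly what neutralizes the unbounded, sign-changing coefficient $-r$ and turns the problem into one with the constant discount $b=\tilde b-\tfrac{\tilde\sigma^2}{2a^2}>0$. Where you diverge is in the mechanics of the comparison. You obtain strictness directly from the transformed zeroth-order term $-b(\tilde u-\tilde v)\le -b\theta_\beta<0$ and run the full Crandall--Ishii doubling in both variables with a localization weight $\beta\Xi$; the paper instead manufactures strictness by replacing $v$ with $v^s=sv$, $s>1$ (using the lower bound in \eqref{schranke} to keep $u(r_0,x_0)-v^s(r_0,x_0)>0$ and monotonicity of $v$ in $x$ to check that $sv$ is still a supersolution), doubles only the $x$-variable with the penalty $\tfrac{\eta}{2}(x-y)^2+\tfrac{k}{\eta^2(y-x)+\eta}$, collapses $q=r$ by monotonicity, and localizes by hand on $r$-dependent regions $\{x>d(r)\}$ treated separately for $r\le b$ and $r>b$. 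Your version is closer to the textbook argument and arguably cleaner; the paper's version avoids the second-order matrix inequality in $r$ at the price of a more ad hoc localization.

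One assertion in your write-up must be substantiated before the proof is complete: the existence of the weight $\zeta$ with $\zeta$ growing strictly faster than $e^{r/a}$ as $r\to+\infty$, tending to $+\infty$ as $r\to-\infty$ (so that the penalized supremum is attained in a compact set), and yet with $\beta$ times its contribution to the operator vanishing as $\beta\to0$ \emph{uniformly over the $\beta$-dependent compacta}. This is where a generic statement could fail; it succeeds here only because the drift is mean-reverting. Concretely, take $\zeta(r)=\cosh(\lambda r)$ with $\lambda>1/a$: then $a\bigl(\tilde b-\tfrac{\tilde\sigma^2}{a^2}-r\bigr)\zeta'(r)+\tfrac{\tilde\sigma^2}{2}\zeta''(r)$ behaves like $-\tfrac{a\lambda}{2}\,|r|\,e^{\lambda|r|}$ as $r\to\pm\infty$ and is therefore bounded above by a constant $C_0$ on all of $\R$, so the penalization contributes at most $2\beta C_0+2\beta(\mu+\xi)$ and the limit $\delta\to0$, $\beta\to0$ closes as you claim. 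With that weight written down (and the growth comparison $\zeta(r)/e^{r/a}\to\infty$ checked against the upper bound of \eqref{schranke}, which gives $\tilde u,\tilde v\le \tfrac{\xi}{b}e^{b/a}(1+e^{r/a})$), your argument is complete.
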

\begin{proof}
Assume, there is a pair $(r_0,x_0)\in \R\times\R_+$ such that $\infty>u(r_0,x_0)-v(r_0,x_0)>0$. Then, there is an $s>1$ such that for $v^s(r,x)=s v(r,x)$ it still holds $u(r_0,x_0)-v^s(r_0,x_0)>0$. The following estimation is straight forward: 
\[
u(r,x)-v^s(r,x)\le \xi\int_0^\infty e^{f(r,s)}\md s-s\xi\int_0^{\frac x{\xi-\mu}} e^{f(r,s)}\md s-s\mu\int^\infty_{\frac x{\xi-\mu}} e^{f(r,s)}\md s\;,
\] 
which means that for all $r\in\R$ there is an $\tilde x\in\R_+$ such that for $x>\tilde x$ it holds $u(r,x)-v^s(r,x)\le 0$. And on the other hand due to the properties of function $f$, for all $x\in\R_+$ one has $\lim\limits_{r\to-\infty} e^{\frac{r-b}a}\{u(r,x)-v^s(r,x)\}\le 0$.
\\Obviously, the function $v^s$ is a supersolution and using the notation from Proposition \ref{properties2} we also obtain 
\begin{align*}
u(r,x)-v^s(r,x) &= u(r,x)-u(r,0)+u(r,0)-sv(r,x)
\\&\le  \frac{x\xi e^{-\min(\frac{r-b}a,0)}}{\mu(a+b)}\big(\max(r-b,0)+\Lambda\big) + v(r, 0)-sv(r,0)
\\&\le \frac{x\xi e^{-\min(\frac{r-b}a,0)}}{\mu(a+b)}\big(\max(r-b,0)+\Lambda\big)  + (1-s)\frac{\mu}b e^{-\max(\frac{r-b}a,0)-\frac{\sigma^2}{2a^2}}\;.
\end{align*}
Assume first $r_0\le b$ and let for $r\le b$
\begin{align*}
d(r):=\frac{(s-1)(b+a)\mu^2}{b\xi \Lambda}e^{\frac{r-b}a-\frac{\sigma^2}{2a^2}} 
\quad\quad \mbox{and}\quad\quad A:=\big\{(r,x)\in\R_+^2:\;x>d(r),\, r\le b\big\}\,.
\end{align*}
Note that the function $d(r)$ is positive and increasing for $r\le b$.
As usual, we let
\[
M:=\sup\limits_{(r,x)\in A}e^{\frac{r-b}a}\{u(r,x)-v^s(r,x)\}\;.
\]
In particular, we know $\infty>M\ge e^{\frac{r_0-b}a}\{ u(r_0,x_0)-v^s(r_0,x_0)\}>0$.
Let $(r^*,x^*)$ be such that $M=u(r^*,x^*)-v^s(r^*,x^*)$ (due to the arguments above it holds $r^*>-\infty$ and $x^*<\infty$) and define for $\eta>0$ and $k:=2s\frac{\xi}{\mu(a+b)}\Lambda$ 
\begin{align*}
&H:=\{(r,q,x,y):\; d(r)<x<y,\,d(q)<y<\infty,\, -\infty<r\le b,\, r<q\le b\}\;,
\\
&f_\eta(r,q,x,y):=e^{\frac{r-b}a}u(r,x)-e^{\frac{q-b}a}v^s(q,y)- \frac\eta 2(x-y)^2-\frac{k}{\eta^2(y-x)+\eta}\;,
\\&M_\eta:=\sup\limits_{(r,q,x,y)\in H}f_\eta(r,q,x,y)\;.
\end{align*}
Note that $f_\eta$ is continuous, which guarantees the existence of $(r_\eta,q_\eta,x_\eta,y_\eta)\in\bar H$, where $\bar H$ denotes the closure of $H$, such that $M_\eta=f_\eta(r_\eta,q_\eta,x_\eta,y_\eta)$. By definition of $(r^*,x^*)$ it holds $(r^*,r^*,x^*,x^*)\in \bar H$. Thus,
\[
M_\eta\ge f_\eta(r^*,r^*,x^*,x^*)=e^{\frac{r^*-b}a}\big(u(r^*,x^*)-v^s(r^*,x^*)\big)-\frac{k}\eta=e^{\frac{r^*-b}a}M-\frac{k}\eta\;. 
\]
We can therefore conclude that there is an $\eta^*$ such that $M_\eta>0$ for all $\eta>\eta^*$ and $\liminf\limits_{\eta\to\infty}M_\eta\ge e^{\frac{r^*-b}a} M$. 
Further, it is clear that because $v^s$ is bounded in $y$ it holds
$\lim\limits_{y\to\infty}f_\eta(r,q,x,y)=-\infty$. \smallskip
\\Obviously, $f_\eta$ is decreasing in $q$, which means that we can assume $r_\eta=q_\eta$, i.e. we consider $(r,r,x,y)\in\bar H$.
For $(r,r,x,x)\in\bar H$ and $h>0$ we have
\begin{align*}
\limsup\limits_{h\to0}\frac{f_\eta(r,r,x,x)-f_\eta(r,r,x,x+h)}{h}\le s\frac{\xi}{\mu(a+b)}\Lambda-k<0\;,
\end{align*}
Thus, there is an $\varepsilon_1>0$ such that $f_\eta(r,r,x,y)>f_\eta(r,r,x,x)$ for $y\in(x,x+\varepsilon_1]$ and $x\in[d(r),\infty)$. For $y\ge \varepsilon_1 +x$ one has, independent of the values of $x$ and $y$:
\begin{align*}
f_\eta(r,r,x,y)&=u(r,x)-v^s(r,y)-\frac\eta 2(x-y)^2-\frac{k}{\eta^2(y-x)+\eta}
\\&\le (\xi-s\mu)\int_0^\infty e^{f(r,s)}\md s-\frac\eta 2\varepsilon_1^2 <0
\end{align*}
for $\eta> \frac{\xi-s\mu}{\varepsilon_1}\int_0^\infty e^{f(r,s)}\md s$.
Thus, $f_\eta(r,r,x,x)\le f_\eta(r,r,x,x+\varepsilon)<0$.\medskip
\\Letting 
\begin{align*}
&d(r):=\frac{(s-1)(b+a)\mu^2}{b\xi (r-b+\Lambda)}e^{-\frac{r-b}a-\frac{\sigma^2}{2a^2}}
\\&H:=\{(r,q,x,y):\; d(r)<x<y,\,d(q)<y<\infty,\, b<r<\infty,\, b<q<r\}
\end{align*}
one can show the uniqueness also for $r>b$.
\end{proof}
\begin{Bem}
The problem with a deterministic linear surplus and an Ornstein-Uhlenbeck process as a short rate seemed to be very simple. Nevertheless, we could not find an explicit solution to this optimization problem. The value function has been proved to be concave in $r$ and convex in $x$, which suggests that the optimal consumption strategy should be of a barrier type. But in contrast to the case with a geometric Brownian motion as a discounting factor, it is not that easy to calculate the return functions corresponding to some barrier strategy.
\end{Bem}

\end{document}